\DeclareMathOperator{\Ima}{Im}
\DeclarePairedDelimiter\floor{\lfloor}{\rfloor}
\definecolor{mycol1}{RGB}{0, 103, 165}
\definecolor{mycol2}{RGB}{38, 115, 77}
\newcommand{\beqar}{\begin{eqnarray}}
\newcommand{\eeqar}{\end{eqnarray}}
\newcommand{\beqarno}{\begin{eqnarray*}}
\newcommand{\eeqarno}{\end{eqnarray*}}
\newcommand{\ba}[1]{\begin{array}{#1}}
\newcommand{\ea}{\end{array}}
\newcommand{\st}{\mathop{\rm s.t.}\nolimits}
\newcommand{\rr}{{\mathbb R}}
\newcommand{\card}{\mathop{\rm card}\nolimits}
\newcommand{\II}{{\mathcal I}}
\definecolor{annotation_bg}{RGB}{204,204,255}
\definecolor{annotation_fg}{RGB}{128,0,0}
\newcommand{\matrice}[2]{\left[\hspace*{-.1cm}\ba{#1} #2 \ea\hspace*{-.1cm}\right]}
\newcommand{\smallmat}[1]{\left[ \begin{smallmatrix}#1 \end{smallmatrix} \right]}
\newcommand{\Aa}{\mathcal{A}}
\newcommand{\BB}{\mathcal{B}}
\newcommand{\CC}{\mathcal{C}}
\newcommand{\QQ}{\mathcal{Q}}
\newcommand{\RR}{\mathcal{R}}
\newcommand{\PP}{\mathcal{P}}
\newcommand{\KK}{\mathcal{K}}
\newcommand{\rank}{\mathop{\rm rank}\nolimits}
\newcommand{\one}{\mathop{{\rm 1}\mskip-5.0mu{\rm I}}\nolimits}
\newcommand{\diag}{\mathop{\rm diag}\nolimits}
\newcommand{\blkdiag}{\mathop{\rm blockdiag}\nolimits}
\newcommand{\eqdef}{\triangleq}
\newcommand{\qed}{\hfill{\small $\blacksquare$}}
\newtheorem{theorem}{Theorem}[section]
\newtheorem{exampleenv}{Example}[section]
\newcommand{\example}[2]{
    \begin{exampleenv}
        {
            \label{#1}
            {#2}
        }
    \end{exampleenv}
}
\newtheorem{lemmaenv}{Lemma}[section]
\newcommand{\lemma}[2]{
	\begin{lemmaenv}
		{\em
			\label{#1}
			{#2}
		}
	\end{lemmaenv}
}
\newtheorem{propositionenv}{Proposition}[section]
\newcommand{\proposition}[2]{
	\begin{propositionenv}
		{\em
			\label{#1}
			{#2}
		}
	\end{propositionenv}
}
\newtheorem{remarkenv}{Remark}[section]
\newcommand{\remark}[2]{
    \begin{remarkenv}
        {
            \label{#1}
            {#2}
        }
    \end{remarkenv}
}
\begin{document}

\author{A. Bemporad\thanks{A. Bemporad is with the IMT School for Advanced Studies Lucca, Italy. Email: \texttt{alberto.bemporad@imtlucca.it}} and G. Cimini\thanks{G. Cimini is with ODYS S.r.l., Italy. Email: \texttt{gionata.cimini@odys.it}}}

\title{Reduction of the Number of Variables in Parametric Constrained Least-Squares Problems}

\maketitle

\begin{abstract}
For linearly constrained least-squares problems that depend on a vector of parameters, this paper proposes techniques for reducing the number of involved optimization variables. After first eliminating equality constraints in a numerically robust way by QR factorization, we propose a technique based on singular value decomposition (SVD) and unsupervised learning,
that we call $K$-SVD, and neural classifiers to automatically partition the set of parameter vectors in $K$ nonlinear regions in which the original problem is approximated by using a smaller set of variables. For the special case of parametric constrained least-squares problems that arise from model predictive control (MPC) formulations, we propose a novel and very efficient QR factorization method for equality constraint elimination. Together with SVD or $K$-SVD, the method provides a numerically robust alternative to standard condensing and move blocking, and to other complexity reduction methods for MPC based on basis functions. We show the good performance of the proposed techniques in numerical tests and in a linearized MPC problem of a nonlinear benchmark process.
\end{abstract}

\begin{keywords}
Constrained least squares, model predictive control, variable elimination, singular value decomposition, unsupervised learning.
\end{keywords}

\section{Introduction}
\label{sec:intro}
Several problems in engineering can be reformulated as parametric constrained least-squares  (pCLS) problems in which the matrices defining the cost function and the equality and inequality constraints depend on a set of parameters. Examples range from control engineering, in particular model predictive control (MPC)~\cite{MRD18}, filtering and smoothing \cite{RRM03}, financial engineering~\cite{SB09,BPG11,BBDKKNS17}, to mention a few.
A common feature in such applications is that the pCLS problem must be solved quickly and in a numerically robust way, often in simple embedded devices. This asks for methods that can reformulate, and possibly approximate, the problem by reducing the number of optimization variables, and that aim at good numerical conditioning, so to ease the numerical procedure employed to solve the reduced problem.

In particular, in MPC formulations the equality constraints are usually eliminated by substituting the predicted state as a function of the applied inputs, a.k.a. \emph{condensing}~\cite{FJ13}, that, as we will discuss later, can lead to very badly conditioned problems. In addition, the number of optimization variables is typically reduced by using \emph{move blocking}~\cite{BMR20,CGKM07,GI10,SM15}. This corresponds to keeping the input signal constant (``blocked'') between pre-specified prediction steps; typically the input signal is free to move during the first few prediction steps and then kept constant. The blocking scheme could be also modified in real-time, such as with the heuristic method proposed in \cite{SVDB16}. Blocking moves, however, complicate the recursive structure of the problem, therefore making efficient solution methods conceived for the non-condensed problem~\cite{Wri19} not directly applicable. Moreover, 
move blocking may prevent the application of involved blocking strategies such as
the aforementioned~\cite{SVDB16}. 

The so-called \emph{partial condensing}~\cite{A15} lies in between the condensed and non-condensed forms. It is a reformulation of the MPC optimization problem 
in which just a subset of the equality constraints 
are eliminated by replacing some of the states with the corresponding state responses.
Such a reformulated quadratic programming (QP) problem has a \emph{block sparse} structure, which reduces the number of optimization variables while still allowing the exploitation of efficient sparse linear algebra. For a benchmark comparisons between partial condensed and non-condensed forms, the reader is referred to~\cite{KFZD18}, where improvements are shown for open-source solvers commonly used in sparse problems. 

Alternatives to the standard condensing methods
have been investigated with the aim of preserving the sparsity of the lower dimensional problem, so to exploit tailored solvers. This is typically denoted as \emph{sparse condensing}. For instance, in~\cite{DLM17} the authors propose the use of Turnback algorithm~\cite{BHKLPW85} to find a banded null basis, and suggest to perform a open-loop simulation 
with a zero input excitation 
to get a particular solution. 
In \cite{JKC12}, a banded formulation is obtained by computing the deadbeat feedback gain, under the assumption of null controllability. The method proposed in \cite{YMDJH19} relaxes such an assumption and the basis for equality constraints is computed through deadbeat responses.
One drawback of these approaches is that in case of unstable models the generated open-loop solution 
may contain numbers that exponentially increase with the prediction horizon. 

Principal Component Analysis (PCA) was also proposed for input trajectory parameterization in MPC formulations. In~\cite{OW14}, the authors propose to construct a basis 
from the principal components of the Hessian of the condensed form. Alternatively, such a basis can be derived from the principal components of the open-loop response matrix, as in \cite{RGSF04}, which however limits its application to the only case where standard condensing 
is used to eliminate equality constraints. 
A different approach, proposed in \cite{UMJ12}, is to apply PCA to a matrix that collects the snapshots of previous control inputs already applied to the real plant, or to a simulation model. A drawback of all the above methods is that, if the matrices of the linear prediction model

are time-varying, PCA must be performed in real-time, making the execution of the overall MPC algorithm computationally intractable in many applications. 
Additionally, the method does not extend to pCLS problems
in which the equality constraints are eliminated by using more
general numerical methods, like the ones described in~\cite[Ch.~20--22]{LH74}. 

\subsection{Contribution}
In this paper we address the issue of reducing the number of optimization variables in pCLS problems by eliminating equality constraints, and of further eliminating degrees of freedom, possibly sacrificing optimality, to simplify the resulting optimization problem. First, we propose a method based on a computationally efficient QR factorization of the matrix associated with equality constraints, that in case of pCLS problems arising from MPC is a much more numerically stable 
method to eliminate equalities 
than standard condensing, in particular when the dynamics 
given by the prediction model are unstable. 

A second contribution is an offline procedure based on PCA, unsupervised learning, and multiclass classification to reduce the number of remaining optimization variables. We call the unsupervised learning procedure $K$-SVD, as it is an extension of SVD to determine $K$ different sets of principal directions, rather than just one as in standard linear PCA. As each sample vector is associated with a corresponding
parameter vector, $K$-SVD determines a nonlinear partition 
of the parameter space into regions, and provides a set of principal directions in each region.
When $K$-SVD is applied to pCLS problems, each sample vector is an optimal solution of the pCLS problem for a corresponding value of the parameter vector, and the method returns $K$ different approximate reformulations of the pCLS problem.

For pCLS problems arising from MPC, we also address issues of recursive feasibility of the proposed scheme, in order to make sure that reducing the number of free optimization variables
does not lead to infeasible reduced-order problems, which is particularly important in an MPC setting. 

Finally, we provide evidence of the benefits of the proposed methods in numerical experiments.

\subsection{Notation}
Given a finite set $I=\{i_1,\ldots,i_N\}$, $\card(I)$ denotes its number $N$ of elements (cardinality). Given a vector $v\in\rr^n$, $\|v\|_2$ is the Euclidean norm
of $v$, $|v|$ is the component-wise absolute value of $v$, $\begin{bmatrix}
v\end{bmatrix}_+$ is the projection of $v$ onto the non-negative orthant,
and $\diag(v)$ is the diagonal matrix of $\rr^{n\times n}$ formed from $v$. Given a matrix $A\in\rr^{n\times m}$, $\|A\|_F$ is the Frobenius norm of $A$, $\|A\|_F^2=\sum_{i=1}^n\sum_{j=1}^mA_{ij}^2$, $\ker(A)$ is the null space of $A$, $\Ima(A)$ the range space of $A$, and $\rank(A)$ the rank of $A$. The sub-matrix of $A$ obtained by collecting its entries whose row-index ranges from $i$ to $j$ and column-index from $h$ to $k$ is denoted by $A_{i:j,k:h}$. We define as $\kappa(A)=\|A\|\|A^+\|$ the generalized condition number of $A$ with respect to the spectral norm, with $A^+$ the Moore-Penrose inverse of $A$.
The matrix $0_{m}\in\rr^{n\times m}$ is the zero matrix with $m$ columns,
or the zero row vector if $n=1$.
The matrix $I_n\in\rr^{n\times n}$ is the identity matrix of dimension $n$.
 We denote by $\mathcal{U}(a,b)$, $a,b\in\rr$, $b>a$, the uniform distribution of real numbers in the interval $[a,b]$, and by $\mathcal{D}(c,d)$, $c,d\in\mathbb{N}_0$, $d>c$ the uniform discrete distribution in the interval $[c,d]$.

\section{Problem formulation}
\label{sec:problemFormulation}
Consider the following parameter-dependent constrained least-squares (CLS) problem
\begin{subequations}
    \beqar
    \min_{z}&&\frac{1}{2}\|A(\theta)z-b(\theta)\|_2^2\label{eq:pCLS-cost}\\
    \st&&  C(\theta)z=e(\theta)\label{eq:pCLS-eq}\\
    &&G(\theta)z\leq g(\theta)\label{eq:pCLS-ineq}
    \eeqar%
    \label{eq:pCLS}%
\end{subequations}
where $z\in\rr^\ell$ is the optimization vector and $\theta\in\rr^p$ is the vector of parameters
defining the problem instance,
$A(\theta)\in\rr^{n_c\times\ell}$, 
$b(\theta)\in\rr^{n_c}$, 
$C(\theta)\in\rr^{n_e\times\ell}$, $e(\theta)\in\rr^{n_e}$,
$G(\theta)\in\rr^{n_i\times\ell}$, and $g(\theta)\in\rr^{n_i}$. 
For simplicity, we assume that $\rank(C(\theta))=n_e$, $\forall \theta\in\rr^p$, although
this assumption can be easily relaxed
(see \hyperref[rmk:qr]{Remark~\ref{rmk:qr}} below).
Our goal is to find numerically
efficient and robust ways to solve~\eqref{eq:pCLS}, including methods
to approximate the problem so that it can be solved with respect to a \emph{reduced number of optimization
variables}. Since now on, for simplicity of notation
we will drop the dependence on $\theta$ where obvious.

\subsection{Variable reduction by elimination of equality constraints}
\label{sec:var_elim}
We start reducing the number of variables by eliminating the equality constraints~\eqref{eq:pCLS-eq}. Several methods exist to handle equality-constrained least squares problems~\cite[Ch.~20--22]{LH74}. 
We consider the following numerically-robust variable-elimination method based on the 
QR factorization
\begin{equation}
    C'=QR,\quad Q=\matrice{cc}{Q_1& Q_2},\quad R=\matrice{c}{R_1\\0}
    \label{eq:QR-fact-C}
\end{equation}
where $Q$ is an orthogonal matrix, $Q'Q=I$, $Q_1\in\rr^{\ell\times n_e}$, $Q_2\in\rr^{\ell\times (\ell-n_e)}$, and $R$ is upper triangular, with $R_1\in\rr^{n_e\times n_e}$. The columns of matrix $Q_2$ provide a basis of $\ker(C)$, as 
\[
    CQ_2=\matrice{cc}{R_1'&0}\matrice{c}{Q_1'\\Q_2'}Q_2=\matrice{cc}{R_1'&0}\matrice{c}{0\\I}=0
\]
Consider now the change of variables 
\[
    y=\matrice{c}{\bar s\\s},\quad \bar s=Q_1'z,\quad s=Q_2'z
\]
Since $z=Qy$, we get $Cz=R'Q'Qy=\matrice{cc}{R_1'&0}y=R_1'\bar s=e$.
As we assumed $\rank(C)=n_e$, matrix $R_1$ is nonsingular and hence
\begin{equation}
    \bar s=(R_1')^{-1}e,\ s~\mbox{free}
    \label{eq:s_bar}
\end{equation}
where $s\in\rr^{\ell-n_e}$ is the vector of remaining optimization variables. By substituting 
\begin{subequations}
    \beqar
    z&=&Q_2s+\bar z\label{eq:z_of_s-a}\\
    \bar z&\eqdef& Q_1\bar s\label{eq:z_of_b}
    \eeqar
    \label{eq:z_of_s}%
\end{subequations}
the following CLS problem \emph{without} equality constraints
\begin{subequations}
    \beqar
    \min_s&&\frac{1}{2}\|AQ_2s-(b-A\bar z)\|_2^2\label{eq:pCLS-cost-r}\\
    \st&&  GQ_2s\leq g-G\bar z\label{eq:pCLS-ineq-r}
    \eeqar%
    \label{eq:pCLS-r}%
\end{subequations}
is equivalent to~\eqref{eq:pCLS}, where in~\eqref{eq:pCLS-r} all constant matrices/vectors are in general a function of $\theta$. Clearly, $\bar{z}$ satisfies~\eqref{eq:pCLS-eq}.

The QR elimination method described above enjoys the following property:
\proposition{prop:qrprop}{
    Vector
    $\bar z$ solves the minimum norm problem 
    \begin{equation}
        \bar z=\arg\min_z \|z\|^2\quad\,\, \textrm{subject to}\,\, Cz=e
        \label{eq:min-norm}
    \end{equation}
}
\begin{proof}
    The optimality conditions from Problem~\eqref{eq:min-norm} are $2z+C' z_D=0$, $Cz=e$,
    which gives $z_D=-2(CC')^{-1}e$, and hence
    $z=C'(CC')^{-1}e$ $=$ $QR(R'R)^{-1}e$ $=$ $Q_1R_1R_1^{-1}(R_1')^{-1}e=\bar z$.
\end{proof}

The orthogonal unit vectors given by the columns of $\begin{bmatrix}Q_1 & Q_2\end{bmatrix}$ and the associated coordinate transformation $z=\bar z+Q_2s$ are ideal for numerical robustness. Consider a generic coordinate transformation $z=Ys_y+Zs$, where $Y$ and $Z$ denote any matrices whose columns form a basis respectively for $\Ima(C)$ and $\ker(C)$. Being $CY$ non-singular, any vector $z=Y(CY)^{-1}e+Zs$ satisfies~\eqref{eq:pCLS-eq}, and one has to select $Y$ in such a way that $CY$ is well conditioned. \hyperref[prop:qrprop]{Proposition~\ref{prop:qrprop}} shows that if $Y(CY)^{-1}e$ solves \eqref{eq:min-norm} then $Y(CY)^{-1}=C'(CC')^{-1}$ which makes the condition number of $CY$ independent from the basis selection. The pair $(Y,Z)$ such that \hyperref[prop:qrprop]{Proposition~\ref*{prop:qrprop}} holds true is not unique. Among the possible choices, letting $(Y,Z)\triangleq(Q_1,Q_2)$ guarantees also the best bound on $i)$ $\kappa(CY)$ as $\kappa(CQ_1)=\kappa(R_1)=\kappa(C)$, $ii)$ $\kappa(AZ)$ as $\kappa(AQ_2)\le\kappa(A)$ if $n_c\ge \ell$ and $A$ non-singular, $iii)$ $\kappa(GZ)$ as $\kappa(GQ_2)\le\kappa(G)$ if $n_i\ge\ell$ and $G$ non-singular. We note that $AZ$ and $GZ$ are the matrices forming the reduced CLS problem~\eqref{eq:pCLS-r}, therefore the coordinate transformation $z=\bar z+Q_2s$ improves the numerical robustness of both variables elimination and CLS problem optimization.

\remark{rmk:qr}{The variable elimination method for equality constraints can be easily generalized to the case in which $\rank(C)=n_3< n_e$. Let $C'P=Q\begin{bmatrix}
        R_{11} & R_{12}\\ 0 & R_{22}
    \end{bmatrix}$ be a rank-revealing QR factorization of $C'$, with $R_{11}\in\rr^{n_3\times n_3}$, $R_{22}\in\rr^{(n_e-n_3)\times(n_e-n_3)}$ and   $P\in\rr^{n_e\times n_e}$ a permutation matrix such that $||R_{2,2}||\le \epsilon$, with $\epsilon$ an arbitrary small tolerance. Let $P=\begin{bmatrix}
        P_1 & P_2
    \end{bmatrix}$ with $P_1\in\rr^{n_e\times n_3}$ and $P_2\in\rr^{n_e\times (n_3-n_e)}$. Then, equality constraints can be eliminated by replacing the fixed variables in~\eqref{eq:s_bar} with $\bar s=(R_{11}')^{-1}P_1'e$. Similarly, the method can be also generalized to the less common case in which $\ell\le n_e$ and, possibly, $\rank(C)<n_e$.\qed}

\section{Variable reduction by SVD}
\label{sec:basis-SVD}
After eliminating equality constraints, we want to attempt to further reduce the complexity 
of solving~\eqref{eq:pCLS-r} by lowering the number of optimization
variables from $n$ to $m$, possibly at the price of introducing suboptimality and infeasibility
(we will handle feasibility issues in \hyperref[sec:feasibility]{Section~\ref{sec:feasibility}}). To this end, let us consider a basis $\Phi=[\phi_1\ \ldots\ \phi_m]\in\rr^{n\times m}$ and a constant vector $\phi_0\in\rr^{n}$ that we use to parameterize the vector $s$ of free variables as an affine function of a new vector $v\in\rr^m$ of free optimization variables
\begin{equation}
    s=\phi_0+\sum_{i=1}^m\phi_iv_i=\phi_0+\Phi v
    \label{eq:s-basis}
\end{equation}
We assume that matrix $\Phi$ is full column rank, otherwise
some degrees of freedom $v_i$ would be wasted. 

In the presence of inequality constraints~\eqref{eq:pCLS-ineq}, after substituting~\eqref{eq:s-basis} and optimizing with respect to $v$ we would like to recover at best the solution $s^*$ of the original inequality-constrained least-squares problem~\eqref{eq:pCLS-r}. However, as we also want to recover the exact solution of~\eqref{eq:pCLS} when
all inequality constraints~\eqref{eq:pCLS-ineq} are inactive and $m<n$, 
for any given new value of $\theta$,
before applying~\eqref{eq:s-basis},
we first compute the unconstrained solution of~\eqref{eq:pCLS-cost-r}
\begin{equation}
    s^*_u=(Q_2'A'AQ_2)^{-1}Q_2'(b-A\bar z)
    \label{eq:unconstrained_sol}
\end{equation}
and check if constraints~\eqref{eq:pCLS-ineq-r} are satisfied for $s=s^*_u$. In case they are,
we can immediately retrieve the optimal solution $z^*_u=\bar z+Q_2s^*_u$.
Otherwise, we substitute~\eqref{eq:s-basis} in~\eqref{eq:pCLS-r} and solve the reduced CLS problem
\begin{subequations}
    \beqar
    v^*=\arg\min_{v\in\rr^m} &\frac{1}{2}\|AQ_2\Phi v-(b-Az_0)\|_2^2\label{eq:pCLS-cost-r-v}\\
    \st&  GQ_2\Phi v\leq g-Gz_0\label{eq:pCLS-ineq-r-v}
    \eeqar%
    \label{eq:v-star}%
\end{subequations}
where $z_0=\bar z+Q_2\phi_0$, and then set
\begin{equation}
    z^*=Q_2\Phi v^*+z_0
    \label{eq:zstar-r}
\end{equation}

\subsection{Choice of basis}
Let us focus on finding a basis $\Phi$ that enables us to reconstruct $s^*$ under inequality
constraints~\eqref{eq:pCLS-ineq-r} at best when the unconstrained solution
$s^*_u$ in~\eqref{eq:unconstrained_sol} is infeasible. Due to the dependence of~\eqref{eq:pCLS} on the vector $\theta$ of parameters, $s^*$ is also a function of $\theta$.

A standard approach for dimensionality reduction is to resort to
principal component analysis (PCA) to identify an optimal basis $\Phi$ for a given set of values $\theta_i$ of the parameter vector, $i=1,\ldots,M$, as follows. For each $\theta_i$, let $s_i^*$ be the constrained solution of~\eqref{eq:pCLS-r}
when $\theta=\theta_i$. Let $\bar s_m=\frac{1}{M}\sum_{i=1}^Ms_i^*$ be the mean of the collected vectors $s_i^*$, and let 
$S=[s_1^*-\bar s_m\ \ldots\ s_M^*-\bar s_m]'$, $S\in\rr^{M\times n}$. Consider the economy-size singular value decomposition (SVD) of $S$
\begin{equation}
    S=U\Sigma V'
    \label{eq:svd-W}
\end{equation}
where $\Sigma=\diag([\sigma_1\ \ldots\ \sigma_{n}]')$, $\sigma_i\geq\sigma_j$ for $i\leq j$.
Matrix $V=[V_1\ \ldots\ V_n]\in\rr^{n\times n}$ is orthogonal, $V'V=I$, and provides the principal directions. Matrix $U\in\rr^{M\times n}$ collects $n$ columns of an orthogonal matrix,
such that $U_i\Sigma$ are the principal components of $s_i^*-\bar s_m$ along the principal directions
$V_1$, $\ldots$, $V_m$, and $U_i$ is the $i$-th row of $U$.
Given the number $m$ of degrees of freedom we want to allow in the constrained least-squares problem~\eqref{eq:pCLS-ineq-r-v} by restricting $s$ as in~\eqref{eq:s-basis}, we set
\begin{subequations}
    \beqar
    \Phi&=&[V_1\ \ldots\ V_m]\\
    \phi_0&=&\bar s_m\label{eq:PCA-basis-phi0}
    \eeqar%
    \label{eq:PCA-basis}%
\end{subequations}
Note that, as suggested above, problem~\eqref{eq:v-star} is not solved when the solution $s_u^*$ is already feasible with respect to the constraints~\eqref{eq:pCLS-ineq-r-v}, and therefore
the basis vectors in~\eqref{eq:PCA-basis} will not be used in such a case. Hence,
to form matrix $S$ we only need to collect samples $\theta_i$ such that the corresponding unconstrained optimizer $s_u^*$ violates~\eqref{eq:pCLS-ineq-r-v}.
To this end, we select vectors $\theta_i$ such that the corresponding optimal Lagrange multipliers $\lambda_i$ associated with the inequality constraints~\eqref{eq:pCLS-ineq-r} are above a certain
threshold $\epsilon_\lambda>0$.

\subsection{Feasibility}
\label{sec:feasibility}
Because of the reduction of the number of degrees of freedom, even if problem~\eqref{eq:pCLS}
is feasible it might happen that~\eqref{eq:v-star} is infeasible.
In this section we discuss a few ways to address this issue.

First, assuming that we have a feasible vector $z_f$ available with respect to
the constraints~\eqref{eq:pCLS-eq}--\eqref{eq:pCLS-ineq}, or equivalently
a solution $s_f$ satisfying~\eqref{eq:pCLS-ineq-r}, we want to be able
to obtain $z_f$ from the parameterization~\eqref{eq:s-basis} in spite of the reduction
of the number of degrees of freedom. To achieve this, one can set $\phi_0=s_f$ instead of $\phi_0$ as in~\eqref{eq:PCA-basis-phi0}
in order to guarantee that $v=0$
is a feasible solution. A more flexible approach is to add 
$\bar s_m-s_f$ in the basis and parameterize $s$ as
\begin{equation}
    s=s_f+v_0(\bar s_m-s_f)+\sum_{i=1}^{m}v_i\phi_i
    \label{eq:s-basis-feas}
\end{equation}
where $v_0\in\rr$ is an extra degree of freedom. Again, the resulting
reduced CLS problem is feasible for $v=0$, where now $v\in\rr^{m+1}$,
and maintains the optimal principal component decomposition~\eqref{eq:PCA-basis}
for $v_0=1$.

A second approach is to replace~\eqref{eq:pCLS-ineq} with the following soft constraints
\begin{equation}
    G(\theta)z\leq g(\theta)+V_g(\theta)\zeta
    \label{eq:pCLS-ineq-soft}
\end{equation}
where $V_g(\theta)\in\rr^{n_i\times n_\zeta}$ is a matrix whose entries are all zero except one per row which is positive. The vector of \emph{slack variables} $\zeta\in\rr^{n_{\zeta}}$ is penalized by changing~\eqref{eq:pCLS-cost} to
\begin{equation}
    \min_{z}\frac{1}{2}\left\|\matrice{cc}{A(\theta) & 0\\0 & \Lambda_\zeta(\theta)}\matrice{c}{z\\\zeta}-\matrice{c}{b(\theta)\\0}\right\|_2^2
    \label{eq:pCLS-cost-soft}
\end{equation}
where $\Lambda_\zeta(\theta)\in\rr^{n_\zeta\times n_\zeta}$ is a diagonal matrix collecting the weights used to penalize $\zeta$. In this case, constraints~\eqref{eq:pCLS-ineq-soft} become
\begin{equation}
    G(\theta)Q_2(\theta)\Phi v\leq g(\theta)-G(\theta)z_0+V_g(\theta)\zeta
    \label{eq:pCLS-ineq-r-v-soft}
\end{equation}
When the original CLS problem is formulated with soft-constraints~\eqref{eq:pCLS-ineq-soft}, 
an alternative approach is to adopt the parameterization introduced in~\eqref{eq:PCA-basis} and keep both $v$ and $\zeta$ as optimization variables. In case $\zeta$ is a vector ($n_\zeta>1$), the number of slacks can be also arbitrarily shrunk to $\bar n_\zeta$, $1\leq \bar n_\zeta<n_\zeta$, by right multiplying matrix $V_g(\theta)$ by a binary matrix $E_\zeta$, $E_\zeta\in\{0,1\}^{n_\zeta\times\bar n_\zeta}$, with
all columns of $E_\zeta$ being nonzero. For instance, for the minimum value $\bar n_\zeta=1$, we set $E_\zeta=[1\ \ldots\ 1]'$.

\example{ex:PCA-basis}{
    Consider a parameter-dependent CLS problem with $n=20$, $n_\theta=4$, $n_c=20$, 
    $A(\theta)\equiv A$, $b(\theta)=b+F\theta$, with the entries of $A$, $b$, and $F$ 
    randomly generated from a normal distribution, and $G(\theta)=[I\ -I]'$, $g(\theta)=[1\ \ldots\ 1]'$ (box constraints).
    After generating $M=1000$ random samples of $\theta$ and collecting the corresponding
    optimal solutions we apply the SVD decomposition~\eqref{eq:PCA-basis} for values of $m$ ranging between $1$ and $n-1$. Then, we generate further $N_{\rm val}=100$ vectors $\theta$ for validation. In both cases, in collecting the parameter vectors $\theta_i$ we discard those whose associated optimal dual variables are all smaller than $\epsilon_\lambda=10^{-3}$, to ensure that at least one inequality constraint is active at optimality. For each $\theta$, we solve the reduced CLS problem~\eqref{eq:v-star} with respect to $v\in\rr^m$ and to the additional slack variable $\zeta\in\rr$ as in~\eqref{eq:pCLS-cost-soft}
    with $\Gamma_\zeta=10^5$ and in~\eqref{eq:pCLS-ineq-soft} $V_g=[1\ \ldots\ 1]'$.
    
    \hyperref[fig:PCA-solution-error]{Figure~\ref{fig:PCA-solution-error}} shows the results obtained in terms
    of the relative 
    optimality error $\frac{\|Az^*-b-F\theta\|-\|Az^*_r-b-F\theta\|}{\|Az^*-b-F\theta\|}$,
    the relative error $\frac{\|z^*-z^*_r\|}{\|z^*\|}$, and
    the maximum relative constraint violation $\max_i\left\{\frac{G_iz^*_r-g_i}{|g_i|}\right\}$,
    where all norms are Euclidean norms. As expected, the higher the number $m$ of basis vectors 
    the better is the quality of the approximation $z_r$. 
\qed}

\begin{figure}[t]
    \centerline{\includegraphics[width=\hsize]{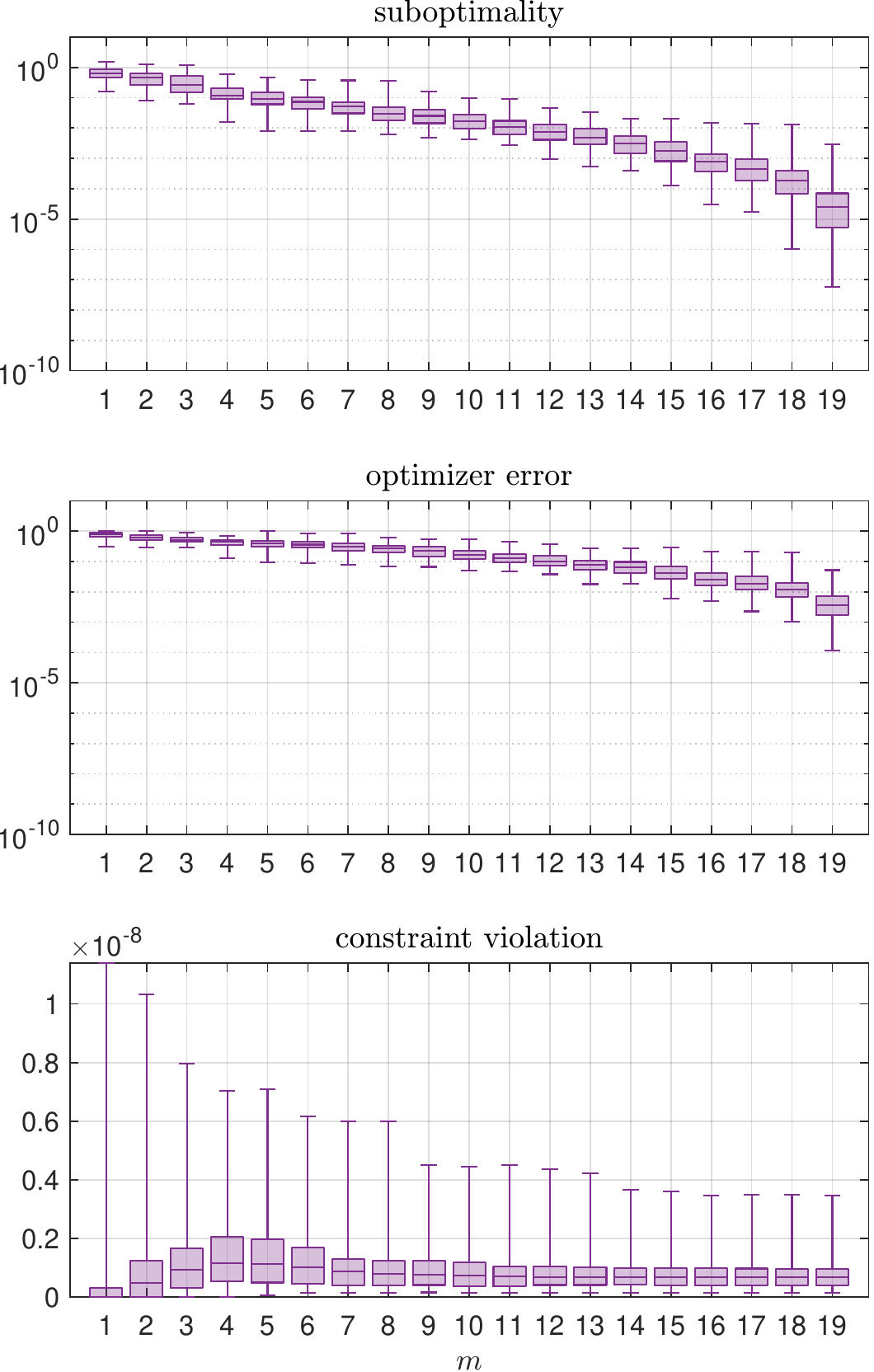}}
    \caption{Distribution of the errors induced by using~\eqref{eq:PCA-basis}, as a function of $m$:
        relative optimality error (top plot), relative difference of optimizers (mid plot), relative
        maximum violation of the inequality constraints (bottom plot) 
        over 100 validation tests.
    }    
    \label{fig:PCA-solution-error}
\end{figure}

\section{Generalization to parameter-dependent bases}
\label{sec:K-SVD}
The method described in \hyperref[sec:basis-SVD]{Section~\ref{sec:basis-SVD}} determines a basis $\Phi$
to use for all parameter vectors $\theta\in\rr^p$. In order to gain more flexibility, we propose to make $\Phi$ a piecewise-constant function of $\theta$ by using unsupervised learning and multiclass classification methods, as we describe next.

\begin{algorithm}[t]
    \caption{$K$-means in optimizer space.}
    \label{algo:K-means}
    ~~\textbf{Input}: Optimizer samples $s_1^*,\ldots,s_M^*\in\rr^{n}$, 
    number $K$ of clusters, number $m$ of basis elements.
    \vspace*{.1cm}\hrule\vspace*{.1cm}
    \begin{enumerate}[label*=\arabic*., ref=\theenumi{}]
        \item Create random partition $I_1,\ldots,I_K$, $\cup_{i=1}^KI_i=\{1,\ldots,M\}$, $I_i\cap I_j=\emptyset$, $\forall i,j=1,\ldots,K$, $i\neq j$;\label{step:algo1-1}
        \item \textbf{repeat}:
        \begin{enumerate}[label=\theenumi{}.\arabic*., ref=\theenumi{}.\arabic*]
            \item \textbf{for} $j=1,\ldots,K$ \textbf{do}:
            \begin{enumerate}[label=\theenumii{}.\arabic*., ref=\theenumii{}.\arabic*]
                \item $M_j\leftarrow$ number of elements of $I_j$;
                \item $\phi_0^j\leftarrow\frac{1}{M_j}\sum_{i\in I_j}s^*_i$;
            \end{enumerate}
            \item \textbf{for} $i=1,\ldots,M$ \textbf{do}:
            \begin{enumerate}[label=\theenumii{}.\arabic*., ref=\theenumii{}.\arabic*]
                \item Reassign $s_i^*$ to cluster $j$ such that
                \begin{equation}
                    j=\arg\min_{j\in\{1,\ldots,K\}}\|s_i^*-\phi_0^j\|_2^2
                    \label{algo:K-means-reassignment}
                \end{equation}
            \end{enumerate}            
        \end{enumerate}
        \item \textbf{until} convergence;
        \item \textbf{for} $j=1,\ldots,K$ \textbf{do}:
        \begin{enumerate}[label=\theenumi{}.\arabic*., ref=\theenumi{}.\arabic*]
            \item Compute basis $\Phi_j\in\rr^{n\times m}$ using SVD
            of matrix $S_j$ obtained by collecting $s_i^*-\phi_0^j$ for $i\in I_j$;
            \label{algo:K-means:SVD}
        \end{enumerate}
        \item \textbf{end}.
    \end{enumerate}
    \vspace*{.1cm}\hrule\vspace*{.1cm}
    ~~\textbf{Output}: Clusters $I_1,\ldots,I_K$, corresponding basis
    $\Phi_1,\ldots,\Phi_K$ and offset vectors
    $\phi_0^1,\ldots,\phi_0^K$.
\end{algorithm}

Assume $M$ samples $s_i^*$ have been collected for the corresponding set $\Theta=\{\theta_i\}_{i=1}^M$ of parameter vectors, as
described in \hyperref[sec:basis-SVD]{Section~\ref{sec:basis-SVD}}, and that we only allow $K$ different matrices  $\Phi_1,\ldots,\Phi_K\in\rr^{n\times m}$ and vectors
$\phi_0^1,\ldots,\phi_0^K\in\rr^n$ for the parameterization of $s$. A first method is to simply perform $K$-means \cite[Algorithm 14.1]{HTF09} on $\Theta$ to get $K$ clusters and repeat the approach
of \hyperref[sec:basis-SVD]{Section~\ref{sec:basis-SVD}} on each cluster. Then, the clusters can be separated
for example by using piecewise linear separation (the Voronoi diagram of the centroid
of the clusters, robust linear programming~\cite{BM94}, or one of the efficient algorithms proposed in~\cite{BPB16a}) in order to define a function that, for each given
$\theta\in\rr^p$, returns the corresponding basis $\Phi_j$ and $\phi_0^j$ to use. The main drawback of this approach is that clustering would be done only based on the Euclidean distance between two parameter
vectors, independently on the values of the corresponding optimizer $s^*$.

Alternatively, one can perform $K$-means
on the set $\{s_i^*\}_{i=1}^M$ to get $K$ clusters $I_1,\ldots,I_k$ of indices,
$\cup_{i=1}^KI_i=\{1,\ldots,M\}$, $I_i\cap I_j=\emptyset$, $\forall i,j=1,\ldots,K$, $i\neq j$,
as described in Algorithm~\ref*{algo:K-means}. This is a classical $K$-means algorithm with random initialization of the clusters $I_j$, $j=1,\ldots,K$, whose convergence is tested
when the set $\{I_j\}$ of clusters is not changing between two consecutive iterations.
After executing \hyperref[algo:K-means]{Algorithm~\ref{algo:K-means}}, we compute a matrix $\Phi_j$ for each cluster by SVD. The cluster indices $\{I_j\}$ determined by Algorithm~\ref{algo:K-means}
also induce a partition of $\Theta$ in corresponding $K$ sets. Finding a (nonlinear) separation function in the $\theta$-space $\rr^p$ is a \emph{multiclass classification} problem~\cite{Aly05} (or \emph{binary classification} if $K=2$), that is the problem of determining a function $\psi:\rr^p\to\{1,\ldots,K\}$ that associates to any vector $\theta$ its corresponding class $j=\psi(\theta)$. 

The main drawback of running $K$-means
in the space of optimization vectors $s$ is that a small \emph{distance} between two optimizers $s_i^*$, $s_j^*$ does not necessarily implies that they can be well approximated by the same basis. For example, $s_i^*=[10\ 0 \ldots\ 0]'$ is much ``closer'' to $s_j^*=[100\ 0 \ldots\ 0]'$
than to $s_h^*=[10\ 1 \ldots\ 0]'$, as $s_i^*,s_j^*$ are both multiple of the same vector.
In light of the above considerations, we propose the variant of $K$-means described in Algorithm~\ref*{algo:K-basis}, that we call \hyperref[algo:K-basis]{$K$-SVD} algorithm, to perform clustering of the index set $\{1,\ldots,M\}$ by reassigning each vector $s_i^*$ to the cluster $I_j$ whose current basis $\Phi_j$ and offset $\phi_0$ best represent it in a least-squares sense. 

\begin{algorithm}[t]
    \caption{\textcolor{mycol1}{$K$-SVD} for parameter-dependent PCA.}
    \label{algo:K-basis}
    ~~\textbf{Input}:  Optimizer samples $s_1^*,\ldots,s_M^*\in\rr^{n}$, 
    number $m$ of basis elements, initial clusters $I_1,\ldots,I_K$, $\cup_{i=1}^KI_i=\{1,\ldots,M\}$, $I_i\cap I_j=\emptyset$, $\forall i,j=1,\ldots,K$, $i\neq j$.
    \vspace*{.1cm}\hrule\vspace*{.1cm}
    \begin{enumerate}[label*=\arabic*., ref=\theenumi{}]
        \item \textbf{repeat}:
        \begin{enumerate}[label=\theenumi{}.\arabic*., ref=\theenumi{}.\arabic*]
            \item \textbf{for} $j=1,\ldots,K$ \textbf{do}:
            \begin{enumerate}[label=\theenumii{}.\arabic*., ref=\theenumii{}.\arabic*]
                \item $M_j\leftarrow$ number of elements of $I_j$;
                \item $\phi_0^j\leftarrow\frac{1}{M_j}\sum_{i\in I_j}s^*_i$;
                \label{algo:K-basis:phi0}
                \item Compute basis $\Phi_j\in\rr^{n\times m}$ using the economy-size SVD 
                \begin{equation}
                    S_j=[U_1^j\ U_2^j]\matrice{cc}{\Sigma^j_1&0\\0&\Sigma^j_2}[\Phi_j\ V_2^j]
                    \label{eq:K-basis-SVD}
                \end{equation}
                where the rows of $S_j$ are $(s_i^*-\phi_0^j)'$, $i\in I_j$;
                \label{algo:K-basis:SVD}
            \end{enumerate}
            \item \textbf{for} $i=1,\ldots,M$ \textbf{do}:
            \begin{enumerate}[label=\theenumii{}.\arabic*., ref=\theenumii{}.\arabic*]
                \item \label{algo:K-basis-reassignment}
                Reassign $s_i^*$ to cluster $j$ such that
                \begin{equation}
                    j=\arg\min_{j\in\{1,\ldots,K\}}\left\{\min_v\|s_i^*-\Phi_jv-\phi_0^j\|_2^2\right\}
                    \label{eq:K-basis-reassignment}
                \end{equation}
            \end{enumerate}            
        \end{enumerate}
        \item  \label{algo:K-basis-stopping} \textbf{until} convergence;
        \item \textbf{end}.
    \end{enumerate}
    \vspace*{.1cm}\hrule\vspace*{.1cm}
    ~~\textbf{Output}: Clusters $I_1,\ldots,I_K$, corresponding bases
    $\Phi_1,\ldots,\Phi_K$ of orthogonal vectors and offset vectors
    $\phi_0^1,\ldots,\phi_0^K$.
\end{algorithm}

\hyperref[algo:K-basis]{$K$-SVD} requires an initial partition of $\{1,\ldots,M\}$ in
$K$ clusters. A possible approach is to create random clusters
$I_1,\ldots,I_K$ as in \hyperref[step:algo1-1]{Step~\ref{step:algo1-1}} of \hyperref[algo:K-means]{Algorithm~\ref{algo:K-means}},
or to get $I_1,\ldots,I_K$ by fully executing the $K$-means Algorithm~\ref*{algo:K-means} 
on $s_1^*,\ldots,s_M^*$.

The following result proves that Algorithm~\ref*{algo:K-basis} is indeed an algorithm,
in that it terminates in a finite number of steps to a local minimum of the problem
of finding the $K$ ``best'' bases.
\begin{theorem}
    \label{th:convergence}
    Let $M>n>m$. \hyperref[algo:K-basis]{$K$-SVD} converges in a finite number of steps
    to a local minimum of the following optimization problem
    \begin{subequations}
        \beqar
        \min_{J,\{\Phi_j,\phi_0^j\}_{j=1}^K} &\displaystyle{ \sum_{i=1}^M\min_v\|s_i^*-\Phi_{J(i)}v-\phi_0^{J(i)}\|_2^2}
        \label{eq:k-svd-cost}\\
        \st& \displaystyle{\phi_0^j=\frac{1}{M_j}\sum_{i\in I_j} s_i^*,\ j=1,\ldots,K}
        \label{eq:k-svd-constr}%
        \eeqar
        \label{eq:k-svd-problem}%
    \end{subequations}
    where $J\in\{1,\ldots,K\}^M$ is a sequence of cluster labels, namely
    $J(i)=j$ implies that $s_i^*$ belongs to cluster \#$j$, 
    $I_j\eqdef\{i\in\{1,\ldots,M\}:\ J(i)=j\}$, $M_i=\card(I_j)$,
    and where the columns of each matrix $\Phi_j$ are orthogonal.
\end{theorem}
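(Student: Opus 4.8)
The plan is to view \hyperref[algo:K-basis]{$K$-SVD} as a block-coordinate (alternating) minimization of the objective in~\eqref{eq:k-svd-cost} subject to~\eqref{eq:k-svd-constr}, and to run the classical monotonicity/finite-range argument used for $K$-means, with the Eckart--Young--Mirsky theorem playing the role of the centroid-optimality fact. Write $V(J,\{\Phi_j,\phi_0^j\})=\sum_{i=1}^M\min_v\|s_i^*-\Phi_{J(i)}v-\phi_0^{J(i)}\|_2^2$ for the objective; since every $\Phi_j$ has orthonormal columns, $\min_v\|s-\Phi_jv-\phi_0^j\|_2^2=\|(I_n-\Phi_j\Phi_j')(s-\phi_0^j)\|_2^2$, so $V$ is a sum of squared residuals of the samples onto the orthogonal complements of the subspaces $\Ima(\Phi_j)$.

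First I would show that no elementary operation of the algorithm increases $V$. For the centroid update~\ref{algo:K-basis:phi0}: with $J$ and the $\Phi_j$ held fixed, $\phi_0^j\mapsto\sum_{i\in I_j}\|(I_n-\Phi_j\Phi_j')(s_i^*-\phi_0^j)\|_2^2$ is minimized by any $\phi_0^j$ whose projection onto $\Ima(\Phi_j)^\perp$ equals that of the cluster mean, and the mean is one such vector; since the mean is moreover the only value compatible with the constraint~\eqref{eq:k-svd-constr}, this step merely enforces feasibility at no cost. For the SVD step~\ref{algo:K-basis:SVD}: with $J$ and $\phi_0^j$ fixed, the $j$-th cluster cost equals $\|S_j-S_j\Phi_j\Phi_j'\|_F^2$, and over all $\Phi_j\in\rr^{n\times m}$ with orthonormal columns this is minimized, by the Eckart--Young--Mirsky theorem, by the leading $m$ right singular vectors of $S_j$ --- precisely the choice~\eqref{eq:K-basis-SVD} --- attaining the value $\sum_{k>m}(\sigma_k^j)^2$. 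For the reassignment step~\ref{algo:K-basis-reassignment}: once $\{\Phi_j,\phi_0^j\}$ is fixed the terms of $V$ decouple over $i$, so replacing $J$ by the pointwise minimizer in~\eqref{eq:K-basis-reassignment} produces a global minimizer of $V(\,\cdot\,,\{\Phi_j,\phi_0^j\})$ over all partitions. Chaining these three facts, the value of $V$ taken at the end of block~\ref{algo:K-basis:SVD} of each iteration forms a non-increasing sequence.

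Next, finite termination. After the centroid/SVD block of an iteration the variables $\{\Phi_j,\phi_0^j\}$ are, up to the harmless rotation/sign ambiguity of the SVD, determined by the current partition $J$, so the value of $V$ there equals a function $g(J)$ of $J$ alone; since there are at most $K^M$ partitions, $g$ has finite range, and being non-increasing the sequence $g(J^{(t)})$ is eventually constant. When $g(J^{(t+1)})=g(J^{(t)})$ all inequalities in the monotonicity chain collapse to equalities, so $J^{(t)}$ and $J^{(t+1)}$ both globally minimize $V(\,\cdot\,,\{\Phi_j^{(t)},\phi_0^{j,(t)}\})$; if ties in~\eqref{eq:K-basis-reassignment} are broken by keeping each $s_i^*$ in its current cluster (the natural convention), a point is moved only when strictly advantageous, hence no point moves and $J^{(t+1)}=J^{(t)}$, which is exactly the stopping condition. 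At termination the returned triple $(J^\star,\{\Phi_j^\star,\phi_0^{j\star}\})$ minimizes the objective of~\eqref{eq:k-svd-problem} over the continuous variables for the fixed partition $J^\star$ and over all partitions for the fixed continuous variables --- a block-coordinatewise minimum, which is the sense of ``local minimum'' intended for a $K$-means-type scheme. The hypotheses $M>n>m$ are used only to guarantee that the economy-size SVD~\eqref{eq:K-basis-SVD} genuinely has the displayed two-block form, with a nonempty reduced block $V_2^j$ of $n-m\ge1$ columns and well-posed data matrices $S_j$.

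I expect the main obstacle to be finite termination rather than monotonicity: monotonicity is the routine Eckart--Young-plus-decoupling computation, whereas excluding an infinite run of distinct partitions all sharing the same (eventually constant) objective requires both the observation that the post-SVD objective depends on the partition alone and a fixed tie-breaking rule in~\eqref{eq:K-basis-reassignment}. A secondary subtlety worth a remark is that a reassignment could empty a cluster, which must be ruled out (or repaired by re-initialization) for the means in~\eqref{eq:k-svd-constr} to stay defined, and, relatedly, pinning down exactly what ``local minimum'' should mean for the mixed discrete--continuous problem~\eqref{eq:k-svd-problem}, for which the block-coordinatewise optimality above is the intended reading.
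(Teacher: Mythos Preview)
Your proposal is correct and takes essentially the same coordinate-descent approach as the paper: monotonicity via Eckart--Young--Mirsky for the SVD step, decoupling for the reassignment step, and finite termination from the finiteness of the partition set under a fixed tie-breaking rule (the paper phrases the latter as ``a predefined criterion'' plus determinism of the SVD routine). Your treatment is slightly more careful on the centroid step and on the finite-termination mechanism, and you rightly flag the empty-cluster and ``local minimum'' subtleties that the paper leaves implicit.
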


\proof We want to show that Algorithm~\ref{algo:K-basis} is a coordinate-descent
method that solves problem~\eqref{eq:k-svd-cost} by iterating between
minimizing w.r.t. $\{\phi_0^j\}_{j=1}^K$, then $\{\Phi^j\}_{j=1}^K$, and 
then $J$.

Consider first the label vector $J$ and $\{\Phi^j\}_{j=1}^K$ fixed.
\hyperref[algo:K-basis:phi0]{Step~\ref{algo:K-basis:phi0}} determines the vectors $\{\phi_0^j\}_{j=1}^K$ 
such that the equality constraints~\eqref{eq:k-svd-constr} are satisfied.
Now consider both the label vector $J$ and $\{\phi_0^j\}_{j=1}^K$ 
fixed. We need to solve~\eqref{eq:k-svd-cost}
with respect to the bases $\{\Phi^j\}_{j=1}^K$, which is equivalent to
solving the following problem
\begin{equation}
    \min_{\{\Phi_j\}_{j=1}^K} \sum_{j=1}^K\left(\sum_{i\in I_j}\min_v\|s_i^*-\Phi_{j}v-\phi_0^{j}\|_2^2\right)
    \label{eq:k-svd-cost-sep}
\end{equation}
Clearly problem~\eqref{eq:k-svd-cost-sep} is separable in the $K$
independent minimization problems
\begin{equation}
    \min_{\Phi,\{v_i\}_{i\in I_j}}\sum_{i\in I_j}\|\bar s_i-\Phi v_i\|_2^2
    \label{eq:k-svd-cost-j}
\end{equation}
where $v_i\in\rr^m$ and we have introduced the simplified notation $\bar s_i\eqdef s_i^*-\phi_0^{J(i)}$. 
Let $\bar S=[\bar s_{i_1}\ \ldots\ \bar s_{i_{M_j}}]'$,
$\bar S\in\rr^{M_j\times n}$, be the matrix collecting the $M_j$ samples
$\bar s_i'$ of cluster $j$ as its rows, where $\{i_1,\ldots,i_{M_j}\}=I_j$,
and let $D=[v_{i_1}\ \ldots\ v_{i_{M_j}}]'$ be the corresponding matrix
of optimal coordinates, $D\in\rr^{M_j\times m}$. 
By Eckart-Young-Mirsky theorem~\cite{GHS87}, the following matrix optimization problem
\begin{equation}
    \hat S=\arg\min_{\rank \hat S\leq m}\|\bar S-\hat S\|_F^2
    \label{eq:k-svd-cost-frob}
\end{equation}
is solved
by $\hat S=U_1\Sigma_1 V_1'$, where $U\Sigma V'$ is the SVD decomposition
of $\bar S$, $U=[U_1\ U_2\ U_3]$, $U_1\in\rr^{M_j\times m}$, 
$U_2\in\rr^{M_j\times n-m}$, 
$U_3\in\rr^{M_j\times (M_j-n)}$, $U'U=I$, 
$\Sigma=\smallmat{\Sigma_1& 0\\0 & \Sigma_2\\0&0}$ is the matrix
of singular values, $\Sigma\in\rr^{M_j\times n}$, and $\Sigma_1\in\rr^{m\times m}$, 
$\Sigma_2\in\rr^{(n-m)\times (n-m)}$, while
$V=[V_1\ V_2]$, $V_1\in\rr^{n\times m}$ $V_2\in\rr^{n\times(n-m)}$, $V'V=I$.
By setting $\Phi_j=V_1$ and $D=U_1\Sigma_1\in\rr^{M_j\times m}$, we get
\[
\|\bar S-\hat S\|_F^2=\sum_{i\in I_j}\|\bar s_i-\Phi_j v_i\|_2^2
\]
and therefore $\Phi_j$ is a matrix of $\rr^{n\times m}$ with orthogonal columns 
that solves problem~\eqref{eq:k-svd-cost-j}.

For fixed $\{\Phi^j\}_{j=1}^K$ and $\{\phi_0^j\}_{j=1}^K$, clearly~\eqref{eq:K-basis-reassignment} determines the vector $J$
of labels that minimizes~\eqref{eq:k-svd-cost}, as for each sample $s_i^*$
the basis $\Phi_j$ and bias $\phi_0^j$ are chosen that give the least value
of $\min_v\|s_i^*-\Phi_jv-\phi_0^j\|_2^2$.

Having shown that \hyperref[algo:K-basis]{$K$-SVD} is a coordinate-descent
algorithm, the cost function~\eqref{eq:k-svd-cost} is monotonically non-increasing
at each iteration and lower-bounded by zero, so it converges asymptotically.
Moreover, as the number of possible
combinations $J$ are finite, Algorithm~\ref*{algo:K-basis} always terminates 
after a finite number of steps, under the assumption that in case of multiple optima
at \hyperref[algo:K-basis-reassignment]{Step~\ref{algo:K-basis-reassignment}} 
the optimizer $J$ is always chosen in accordance with a predefined criterion,
and that the SVD algorithm used at \hyperref[algo:K-basis:SVD]{Step~\ref{algo:K-basis:SVD}} returns
the same matrices $U,\Sigma,V$ for the same given input matrix $\bar S$.
\qed

Due to the finite termination property shown in \hyperref[th:convergence]{Theorem~\ref{th:convergence}},
a termination criterion at \hyperref[algo:K-basis-stopping]{Step~\ref{algo:K-basis-stopping}} is to stop
when the label vector $J$ does not change after one iteration. 
Note that Algorithm~\ref*{algo:K-basis} is only guaranteed to convergence to a local
minimum, whether this is also a global one depends on the initial 
guess $J$.
Moreover, the decrease of the cost function in~\eqref{eq:k-svd-cost} can be easily monitored. In fact, from the proof of Eckart-Young-Mirsky theorem for the Frobenius norm, we have that for a given label sequence $I$ the corresponding optimal cost~\eqref{eq:k-svd-cost-sep} is
\[
\sum_{j=1}^K\left(\sum_{i\in I_j}\min_v\|s_i^*-\Phi_{j}v-\phi_0^{j}\|_2^2\right)
=\sum_{j=1}^K\sum_{h=m+1}^{n}(\Sigma_{2ii}^j)^2,
\]
that is a value that can be immediately computed as a byproduct of the SVDs in~\eqref{eq:K-basis-SVD}. 

After running \hyperref[algo:K-basis]{$K$-SVD}, the samples $\theta_i$ get also labelled by the index sets $I_1,\ldots,I_K$. Several multiclass classification methods can be then used
to determine the classification function $\psi$, such as 
multicategory proximal support vector machines~\cite{FM05} or neural networks~\cite{OU07}.
In this paper we use $K$ one-to-all neural classifiers $\psi_i:\rr^{p}\to[0,1]$, $i=1,\ldots,K$,  and then define
the nonlinear separation function $\psi:\rr^{p}\to[0,1]$ such that $\psi(\theta)=\arg\max_{j=1\ldots K}\{\psi_j(\theta)\}$.

\remark{rmk:explicit}{
    A justification for associating a basis $\Phi$, $\phi_0$ to a region of the space of parameters $\theta$ stems from the multiparametric analysis of problem~\eqref{eq:pCLS-r}. For instance, in case matrices $A$ and $G$ do not depend on $\theta$ and $b(\theta)$, $g(\theta)$ are affine,~\eqref{eq:pCLS-r} is a multiparametric quadratic programming (mpQP) problem~\cite{BMDP02a}, whose optimal solution $s^*(\theta)$ is piecewise affine. Therefore, $s(\theta)=H_j\theta+h_j$ on each polyhedral region $P_j$ of the set of parameters $\theta$ get
    partitioned by the mpQP algorithm, $j=1,\ldots,n_p$. If the parametric solution $s(\theta)$ were known, $n_p\leq K$, $m\geq p$, for all $\theta\in P_j$ the parameterization~\eqref{eq:s-basis} would reproduce the optimal solution if one 
    computes the QR factorization of $H_j=Q_jR_j$ and sets $\Phi_j=[Q_j\ 0_{p-m}]$, $\phi_0^j=h_j$, and $v=[(R_j\theta)'\ 0_{p-m}]'$.
\qed}

\remark{rmk:k-svd-different-size}{
    In \hyperref[algo:K-basis]{$K$-SVD} one may generalize and assign a different number of basis elements to each of the $K$ clusters. The proof of \hyperref[th:convergence]{Theorem~\ref{th:convergence}} immediately extends to such a more general case, as in~\eqref{eq:k-svd-problem}, as the proof is independent on the number $m_j$ of columns each matrix $\Phi_j$ has, as long as those numbers
    $m_j$ are fixed.    
\qed}

\example{ex:suboptimality-SVD}{We consider again the parametric constrained least-squares problem defined in \hyperref[ex:PCA-basis]{Example~\ref{ex:PCA-basis}}. Using the same $M=1000$ samples, we run \hyperref[algo:K-basis]{$K$-SVD} for different values of $K$ (number of clusters) and $m$ (number of basis vectors), where the case $K=1$ corresponds to the single SVD decomposition~\eqref{eq:svd-W}.
    The clusters generated by \hyperref[algo:K-basis]{$K$-SVD} are separated by training $K$ neural one-to-all classifiers. Each classifier is a neural network composed by $3$ layers of $10$ neurons each with sigmoidal activation function $\frac{1}{1+e^{-x}}$, cascaded by a sigmoidal output function, corresponding to $281$ coefficients. For each cluster $h$, $h=1,\ldots,K$, if the training sample $\theta_i$ belongs to that cluster then the corresponding label $j_i=1$, otherwise $j_i=0$. The standard cross-entropy loss $-(j\log(\hat j)+(1-j)\log(1-\hat j))$ is used for training the network using the batch nonlinear programming solver implemented in the ODYS Deep Learning Toolset~\cite{ODYS-DLT}. The total CPU time to run \hyperref[algo:K-basis]{$K$-SVD} and train the neural classifiers in MATLAB R2020b ranges between $11$ and $45$ seconds on an Intel Core i7-8550U 1.99 GHz machine, with roughly $5$ to $10\%$ of the time spent to run \hyperref[algo:K-basis]{$K$-SVD}.
    
    The obtained results in terms of relative optimality error, relative error,  
    and maximum relative constraint violation are shown in \hyperref[fig:PCA-solution-error]{Figure~\ref{fig:PCA-solution-error}}.
    As expected, the quality of the approximation $z_r$ increases if more basis vectors $m$ and partitions $K$ are available.
\qed}

\begin{figure}[t]
    \centerline{\includegraphics[width=\hsize]{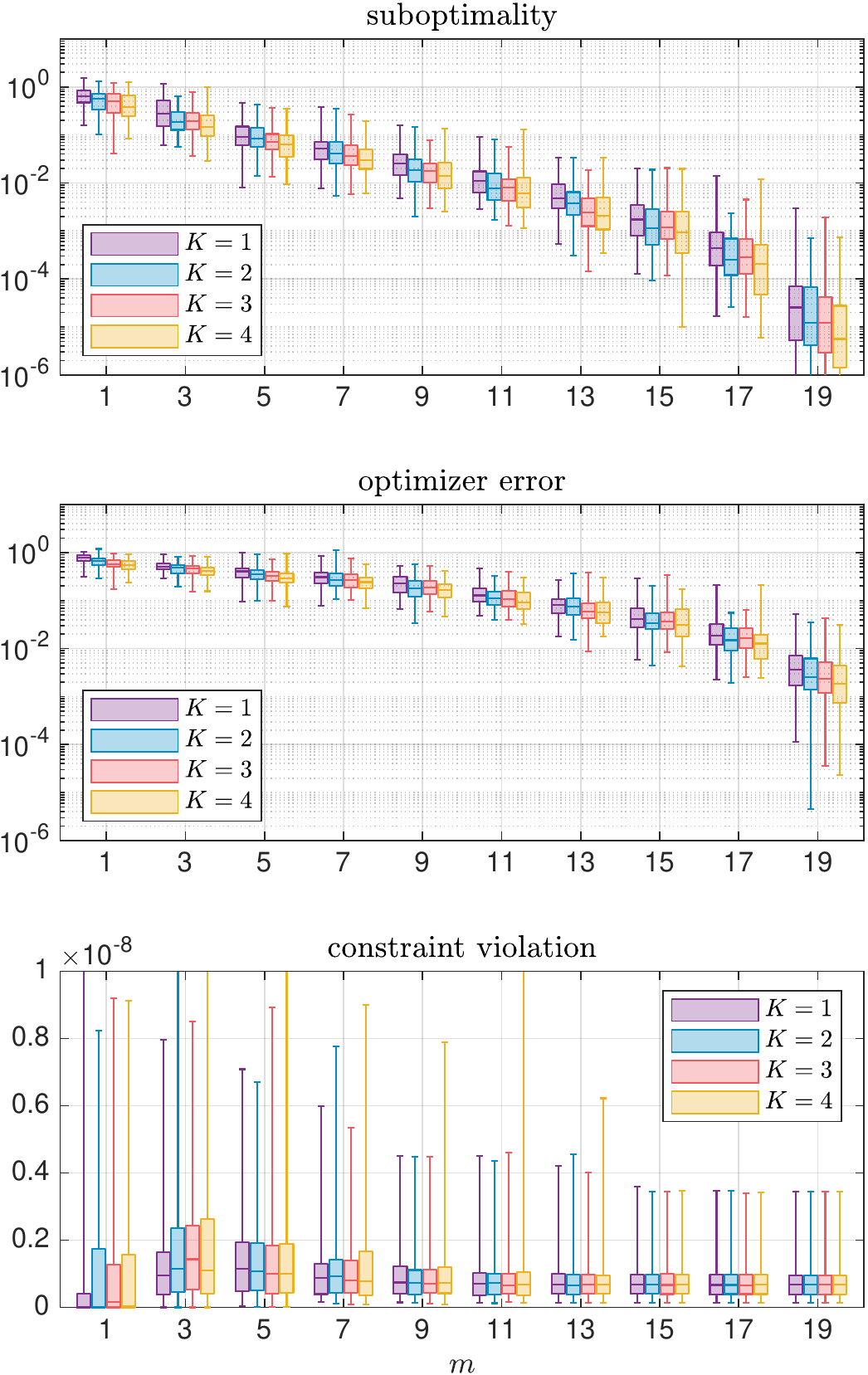}}
    \caption{Distribution of the errors induced by using Algorithm~\ref{algo:K-means}
        and one-to-all neural classifiers:
        relative optimality error (top plot), relative difference of optimizers (mid plot), relative
        maximum violation of the inequality constraints (bottom plot) 
        over 100 validation tests. The case $K=1$ corresponds to using a single SVD decomposition and corresponds to the results shown in Figure~\ref{fig:PCA-solution-error}.}
    \label{fig:k-PCA-solution-error}
\end{figure}

\subsection{Preservation of selected optimization variables}
\label{sec:z1}
The approximation method described in the previous sections aims at approximating
the reduced vector $s$ of variables obtained after eliminating the equality constraints~\eqref{eq:pCLS-eq}. On the other hand, an approximation error $\Delta s$ with respect to $s^*$  propagates on the original equality-constrained vector $z$ as $Q_2\Delta s$, see~\eqref{eq:z_of_s-a}. It might be desirable to reduce the approximation error on certain components of $z$ of main interest, say without loss of generality the first $k\leq m$ components $z_1=[z^1\ \ldots\ z^k]'$ of $z$. 

Using weighted low-rank approximation methods instead of SVD in \hyperref[algo:K-means:SVD]{Step~\ref{algo:K-means:SVD}} of Algorithm~\ref*{algo:K-means}
or in \hyperref[algo:K-basis:SVD]{Step~\ref{algo:K-basis:SVD}} of $K$-SVD to
take into account matrix $Q_2$ in~\eqref{eq:z_of_s} would not be convenient, as it would involve iterative procedures~\cite{SJ03}. Instead, we propose to compute a different QR factorization than in~\eqref{eq:QR-fact-C}. 

Let $z=\smallmat{z_1\\z_2}$,
$z_2=[z^1\ \ldots\ z^k]'$, $z_2\in\rr^{\ell-k}$. Accordingly, the equality constraints $Cz=e$ become $C_1z_1+C_2z_2=e$. Let us compute the QR factorization of $C_2'$
\begin{equation}
    C_2'=\bar Q\matrice{c}{\bar R_1\\0},\quad \bar Q=[\bar Q_1\ \bar Q_2]    
    \label{eq:QRfact-2}
\end{equation}
and consider the change of variables
\[
y=\matrice{c}{y_1\\y_2}=\matrice{cc}{I & 0\\0 & \bar Q'}z,\quad  z=\matrice{cc}{I & 0\\0 & \bar Q}y
\]
Then, by further splitting $y_2=[s_3'\ s_2']'$, $s_3\in\rr^{n_e}$, $s_2\in\rr^{\ell-k-n_e}$, we get $e=Cz=C_1y_1+[\bar R_1'\  0]\bar Q'[0\ \bar Q]\smallmat{y_1\\y_2}=C_1z_1+\bar R_1's_3$
from which we obtain
\begin{subequations}
    \beqar
    s_3&=&(\bar R_1')^{-1}(e-C_1z_1)\\
    s&=&\matrice{c}{z_1\\s_2}~\mbox{free}
    \label{eq:s=z1s2}
    \eeqar%
\end{subequations}
Finally, we get the following transformation from $s$ to $z$
\begin{equation}
    z=\matrice{c}{y_1\\\bar Qy_2}\matrice{c}{z_1\\\bar Q_1s_3+\bar Q_2s_2}=\bar Q_2s+\bar z
    \label{eq:z1-kept}
\end{equation}
where 
\[
Q_2\eqdef \matrice{cc}{I & 0\\-\bar Q_1(\bar R_1')^{-1}C_1 & \bar Q_2},
\quad \bar z=\matrice{c}{0\\\bar Q_1(\bar R_1')^{-1}e}
\]
Therefore, we can apply \hyperref[algo:K-basis]{$K$-SVD} or \hyperref[algo:K-means]{Algorithm~\ref{algo:K-means}} to the samples $s_i^*$ obtained from~\eqref{eq:s=z1s2} after substituting $z$ as in~\eqref{eq:z1-kept} in the constrained parametric LS problem. In order to emphasize that the components $z_1$ of $s$ are better approximated than the remaining components $s_2$, we can replace the samples $s_i^*$ with $\smallmat{\tau z_1^*\\s_2^*}$ in computing the bias terms and SVDs, where $\tau>1$ is a scalar parameter.
After executing \hyperref[algo:K-basis]{$K$-SVD} (or \hyperref[algo:K-means]{Algorithm~\ref{algo:K-means}}), the bias term $\phi_0$ (or $\phi_0^j$) needs to be scaled back by dividing its first
$k$ components by $\tau$.

\section{Variable-reduction methods for MPC}
We now want to adopt the methods developed in the previous sections to address the
pCLS problems that arise in model predictive control formulations, and to further refine
such methods to exploit the particular structure of those problems.

Let us assume that the dynamics of the process are modeled by the linear state-space model 
\begin{equation}
    x_{k+1}=\Aa_k(\theta)x_k+\BB_k(\theta)u_k
    \label{eq:dynamics}
\end{equation}
with $\Aa\in\rr^{n_x\times n_x}$ and $\BB\in\rr^{n_x\times n_u}$. Most often, model~\eqref{eq:dynamics} is obtained by linearizing a nonlinear model of the process around a nominal trajectory, and $x_k$, $u_k$ represent the predicted deviations from such a trajectory.
Typically the nominal state trajectory is the one obtained by applying the sequence of manipulated variables optimized at the previous MPC execution, starting from the current state. 

Let $z=[u_0'\ x_1\ \ldots\ u_{T-1}\ x_T]'$ be the vector of optimization variables collecting the sequence of manipulated variables $u_k\in\rr^{n_u}$ and the corresponding state variables $x_k\in\rr^{n_x}$ over a prediction horizon of future $T$ steps, with $z\in\rr^\ell$, $\ell=T(n_x+n_u)$.
Vector $\theta\in\rr^p$ collects the current estimate $x(t)$ of the plant to control, the current (and possibly future) reference signals to track, and other parameters affecting the prediction model, the performance index, and the constraints. 

The equality constraints~\eqref{eq:pCLS-eq} embedding model~\eqref{eq:dynamics} have the following band-matrix form
\begin{equation}
    \begin{array}{c} 
        \matrice{ccccccc}{\BB_0& -I    & 0   &    & \ldots  &         &0\\
            0 & \Aa_1   & \BB_1 & -I & 0       & \ldots  & 0\\
            \vdots & &     & \vdots  &  &  & \vdots\\
            0 & \ldots&     & 0  & \Aa_{T-1} & \BB_{T-1} & -I}
        z=\matrice{c}{\Aa_0x_0\\0\\\vdots\\0}\\
        \vspace*{-1em}
    \end{array}
    \label{eq:model-constraints}
\end{equation}
where in~\eqref{eq:model-constraints} we have omitted the possible
dependence of $\Aa_k$, $\BB_k$ on $\theta$ for simplicity. 
Constraints~\eqref{eq:model-constraints} are a special case of~\eqref{eq:pCLS-ineq} 
in which $n_e=Tn_x$, $\ell=T(n_x+n_u)$. We assume that $\rank(C)=Tn_x$.

In MPC problems, the cost function~\eqref{eq:pCLS-cost} is usually defined as
\begin{equation}
    \ba{rcl}
    A(\theta)&=&\blkdiag(R_{u_0},R_{x_1},\ldots,R_{u_{(T-1)}},R_{x_T})\\[1em]
    b(\theta)&=&\matrice{c}{R_{u0}u_{r0}\\R_{x1}x_{r1}\\\vdots\\R_{u(T-1)}u_{r(T-1)}\\
        R_{xT}x_{rT}}
    \ea
    \label{eq:linear-cost}
\end{equation}
where $\blkdiag()$ is the block-diagonal matrix of its arguments,
$A(\theta)\in\rr^{\ell\times\ell}$, $b(\theta)\in\rr^{\ell}$,  
$\|R_{u_k}(\bar u_k-u_k)\|_2^2$ is the penalty on inputs and $\bar u_{k}\in\rr^{n_u}$
is the input reference, and similarly $\|R_{x_k}(\bar x_k-x_{k})\|_2^2$ penalizes the deviation of the states from their reference $\bar x_{k}\in\rr^{n_x}$.
Penalties on outputs $\|R_{y_k}(\bar y_k-y_k)\|_2^2$, with $y_k=\CC_k x_k$, $y_k\in\rr^{n_y}$,
are a special case in which $R_{x_k}=R_{y_k}\CC_k$, and $R_{x_k}\bar x_{k}$ is replaced
by $R_{y_k}\bar y_{k}$ in~\eqref{eq:linear-cost}. Mixed input/state costs involving both $u_k$, $x_k$ could be also considered, which would lead to having off-diagonal blocks in $A(\theta)$.
Note that the weights in~\eqref{eq:linear-cost} may be rectangular matrices, for example in case some input is not weighted $R_{u_k}$ would have less than $n_u$ rows.

The inequality constraints~\eqref{eq:pCLS-ineq} can
collect $n_i$ constraints on inputs, input increments, 
states, and any other linear constraint involving a linear combination of such variables. In most practical MPC applications, it is customary to treat some of the inequality constraints as \emph{soft}, by introducing a vector $\zeta\in\rr^{n_\zeta}$ of additional slack variables as in~\eqref{eq:pCLS-ineq-soft}.

In deploying MPC laws in embedded platforms, solving problem~\eqref{eq:pCLS} efficiently in real time poses two main challenges:
\begin{itemize}
    \item [C1.] How to deal with equality constraints~\eqref{eq:pCLS-eq} and possibly
    eliminate them in a numerically robust way, reducing the number of optimization variables (excluding slacks) 
    from $\ell$ to $n=Tn_u$;
    \item [C2.] How to further reduce the number of degrees of freedom
    from $n$ to $m$, $m<n$, possibly sacrificing optimality in favor of lighter computations.
\end{itemize} 
Regarding (C1), the pCLS problem~\eqref{eq:pCLS} is typically reformulated in
the so-called \emph{condensed form} by replacing the predicted states $x_k$ 
with the corresponding prediction
\begin{equation}
    x_k=\prod_{i=0}^{k-1}\Aa_ix_0+\sum_{i=0}^{k-1}\prod_{j=i+1}^{k-1}\Aa_j\BB_iu_i
\label{eq:linear-response}
\end{equation}
where $\prod_{i=j}^{j+k}\Aa_i=\Aa_{j+k}\ldots \Aa_j$
if $k\geq 1$, or $\Aa_j$ if $k=0$, or the identity $I_{n_x}$ if $k<0$.
Such a condensing procedure allows one to recast~\eqref{eq:pCLS} to a smaller pCLS problem with only $n$ (or $n+n_\zeta$) variables. The main drawback of the condensing~\eqref{eq:linear-response} is its potentially poor numerical robustness: for example in case of unstable dynamics the substitution~\eqref{eq:linear-response} easily blows up numerically. 

A possible remedy is to prestabilize the dynamical system~\eqref{eq:dynamics} by setting
\begin{equation}
    u_k=\KK_k(\theta)x_k+u^c_k    
    \label{eq:prestabilizer}
\end{equation}
and then treat $u^c_0$, $\ldots$, $u^c_{T-1}$ as new optimization variables. While this can be easily accomplished
offline for linear time-invariant (LTI) systems, in case of linear parameter-varying (LPV)
or linear time-varying (LTV) systems the stabilizing set of feedback gains $\{\KK_k\}_{k=0}^{T-1}$ must be computed online for the given value of $\theta$. A way
to compute such gains is to solve a finite-horizon unconstrained linear-quadratic (LQ) optimal control problem
using the Riccati iterations
\begin{equation}
\ba{rcl}
    \PP_{k}&=&\QQ_k-\Aa_k'\PP_{k+1}\BB_k(\RR_k+\BB_k'\PP_{k+1}\BB_k)^{-1}\BB_k'\PP_{k+1}\Aa_k\\
    &&+\Aa_k'P_{k+1}\Aa_k\\
    \KK_{k}&=&-(\RR_k+\BB_k'\PP_{k}\BB_k)^{-1}\BB_k'\PP_{k}\Aa_k\\
\ea
\label{eq:DP}
\end{equation}
for $k=T-1,\ldots,0$, where $\QQ_k=R_{x_k}'R_{x_k}$, $\RR_k=R_{u_k}'R_{u_k}$,
and $\PP_T=R_{x_T}'R_{x_T}$.

Solution methods based on the \emph{non-condensed form}, in which the states $x_1$, $\ldots$, $x_T$ are kept as optimization variables, have the drawback of requiring general purpose sparse linear-algebra libraries to solve~\eqref{eq:pCLS} efficiently, or ad-hoc algorithms
exploiting the special structure~\eqref{eq:model-constraints}, for example when using interior-point methods~\cite{Wri19}. 
A drawback of such an approach comes when one wants to address the second challenge (C2), that is to reduce the number of optimization variables to simplify the mathematical programming problem to solve online, which alters the structure of~\eqref{eq:model-constraints}. 

The variable elimination method in \hyperref[sec:var_elim]{Section~\ref{sec:var_elim}}, based on the QR factorization of $C'(\theta)$, offers a numerically robust way to cope with (C1), even when the dynamics~\eqref{eq:dynamics} is unstable, while preserving the possibility to further reduce the degrees of freedom by means of advanced algorithms such as \hyperref[algo:K-basis]{$K$-SVD}. 

The next example shows the numerical properties of the different variable elimination methods mentioned above.

\begin{figure}[tb]
	\includegraphics[width=\hsize]{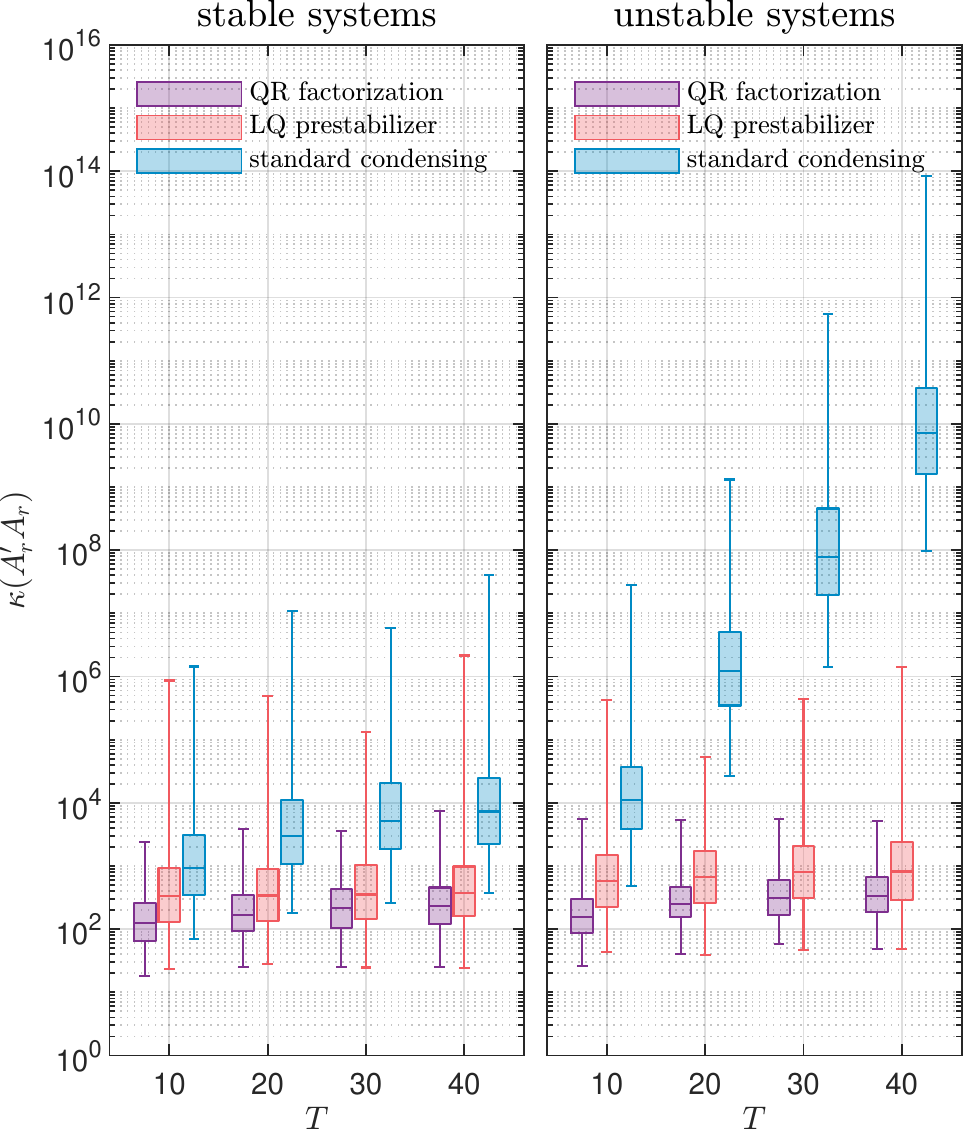}
	\caption{Distributions of the condition number of $A_r'A_r$, the Hessian matrix of the reduced LS problem after eliminating equality constraints~\eqref{eq:pCLS-eq}, comparing different condensing techniques. The distribution for each prediction horizon is obtained on a set of $1000$ random linear systems. Both stable (\emph{left}) and unstable (\emph{right}) systems are tested, the latter with real eigenvalues between $1$ and $1.25$. }
	\label{fig:qr_vs_standard_condensing}
\end{figure}

\example{ex:QR-LTI-unstable}{
Consider random linear systems with $n_x=5$ states, $n_u=3$ inputs,
and prediction horizons $T\in\{10,20,30,40\}$. We first consider \emph{stable} systems, with eigenvalues $\lambda(\Aa)$ such that $\lambda_i\sim\mathcal{U}(0.499,0.999),\, i=1,\dots,n_x$, and then \emph{unstable} systems with real eigenvalues $\lambda_i\thicksim\mathcal{U}(1,1.25),\,i=1,\dots,n_x$. For each prediction horizon $T$, a total of $1000$ stable and unstable random systems are generated. The weight matrices $R_{u_i},\,R_{x_i},\,i=1,\dots,T$ for input and state penalties are constant along the horizon, and change for each problem with their structure chosen such that $\rank(A)\sim\mathcal{D}(\floor{ {Tn_x\over 3}}+Tn_u,\ell)$, and their non-zero elements drawn from $\mathcal{U}(1,10)$. \hyperref[fig:qr_vs_standard_condensing]{Figure~\ref{fig:qr_vs_standard_condensing}}
shows the condition number of the Hessian matrix $A_r'A_r$ associated with the LS problem
\begin{equation}
	\min_s\,\,\frac{1}{2}\|A_rs-b_r\|_2^2\quad 	\st\,\,G_rs\leq g_r
	\label{eq:pCLS-generic}%
\end{equation}
obtained from~\eqref{eq:pCLS} by eliminating $Tn_x$ variables by standard condensing~\eqref{eq:linear-response} ($s=[u_0'\ \ldots\ u_{T-1}']'$), by prestabilization using
dynamic programming~\eqref{eq:DP} ($s=[(u^c_0)'\ \ldots\ (u^c_{T-1})']'$, see~\eqref{eq:prestabilizer}), and by QR factorization~\eqref{eq:QR-fact-C} ($s$ such that $z=\bar z+Q_2s$). The relation $\kappa(A_r'A_r)=\kappa^2(A)$ clearly holds. While
standard condensing numerically explodes (even for moderately unstable systems), both the LQ prestabilizer and the QR factorization deal effectively with unstable systems, although the latter is considerably more robust.\qed
}

Note that advanced pre-conditioning methods for $C(\theta)$, among which we cite geometric mean scale~\cite{F82}, can be used to improve the numerical robustness of the methods in \hyperref[ex:QR-LTI-unstable]{Example~\ref{ex:QR-LTI-unstable}}. However, such an improvement is only marginal and, more importantly, standard condensing would blow up numerically anyway.

\subsection{Efficient elimination of equality constraints}
\label{eq:QRimpl}
When applied to MPC settings, the numerical robustness of the elimination method in \hyperref[sec:var_elim]{Section~\ref{sec:var_elim}} comes at the expense of a substantially less efficient MPC routine if compared to~\eqref{eq:linear-response}. Indeed, a standard QR decomposition algorithm, with full $Q$ computation, introduces an $\mathcal{O}(5T^3n_x^3)$ complexity term. In this section we present an algorithm tailored to computing the QR decomposition of $C'(\theta)$ very efficiently. Afterwards, we derive a rigorous analysis of the flops (floating-point operations) required by the standard and the proposed QR condensing routines.

We start by noting that matrix $C(\theta)$ is very sparse. In such a scenario it is preferred to annihilate the columns of $R_1$ by means of Givens rotations, which selectively introduce zeros below the diagonal, one
element at a time \cite{Wilki88,GVL13}. Let $x=\begin{bmatrix}a & b\end{bmatrix}'$ be a vector with $\|x\|_2=r>0$; a Givens rotation is a $2$-by-$2$ matrix $G_r$ such that $G_r\smallmat{a\\b}=\smallmat{r\\0}$. It is easy to show that the $Q$ and $R_1$ factors of the decomposition of $C'(\theta)$ are sparse as well and such that
\begin{subequations}
	\begin{align}
		R_1&=\begin{bmatrix}
			U_1 & \tilde R_1 & 0&  0\\
			0 & \ddots & \ddots & 0\\
			\vdots & \ddots & \ddots & \tilde R_{T-1}\\
			0 & \dots & 0 &   U_T\\
		\end{bmatrix}\label{eq:r_sparsity}\\
		\begin{bmatrix}
			Q_1\, |\, Q_2
		\end{bmatrix}&=\begin{bmatrix}
			{Q}_{1,1} & \dots & Q_{1,T}\, |\, {Q}_{2,1} & \dots & {Q}_{2,T}
		\end{bmatrix}
		\label{eq:q_sparsity}
	\end{align}
	\label{eq:qr_sparsity}%
\end{subequations}
where $U_i\in\rr^{n_x\times n_x}$ is an upper triangular matrix, $\tilde R_j\in\rr^{n_x\times n_x}$ is dense, $Q_{1,i}=\begin{bmatrix}
	\tilde{Q}_{1,i}' & \tilde U_i' & 0
\end{bmatrix}'$, $Q_{2,i}=\begin{bmatrix}
	0 & L_i' & \tilde{Q}_{2,i}'
\end{bmatrix}'$, with $\tilde{Q}_{1,i}\in\rr^{(n_ui+n_x(i-1))\times n_x}$ dense, $\tilde{U}_i\in\rr^{n_x\times n_x}$ upper triangular, $L_i\in\rr^{n_u\times n_u}$ lower triangular, $\tilde{Q}_{2,i}\in\rr^{(n_u+(n_x+n_u)(T-i))\times n_u}$ dense, and $i\!=\!1,\dots,T$, $j\!=\!1,\textrm{\dots}, T-1$. The sparsity pattern of~\eqref{eq:model-constraints} and~\eqref{eq:qr_sparsity} provides the basis for developing Algorithm \ref*{algo:QRmod}, that we call \hyperref[algo:QRmod]{QR-MPC}. Its computational analysis is described by the following lemma, which highlights the efficiency of the method.

\lemma{lem:qrcomplexity}{Let $C\in\rr^{Tn_x \times \ell}$ be the matrix of equality constraints for problem~\eqref{eq:pCLS} defined as in~\eqref{eq:model-constraints}. \hyperref[algo:QRmod]{QR-MPC} computes the QR decomposition $\smallmat{Q_1 &Q_2}\smallmat{R_1 \\ 0}=C'$ used for the numerically robust condensing of problem \eqref{eq:pCLS}, and enjoys the following properties:
	\begin{itemize}
		\item [$i$)] The serial complexity $\chi_{\text{QR}}$ of \hyperref[algo:QRmod]{QR-MPC}, namely the total number of flops required for termination without any parallel computation, is
		\begin{equation}
			\chi_{\text{QR}}\!=\!T^3(n_x^2n_u\!+\!n_xn_u^2)\!+\!3T^2n_un_x(2n_x\!+\!n_u\!+\!1)\!+\!\mathcal{O}(T)
			\label{eq:chi_QR}
		\end{equation}
		\item [$ii$)] \hyperref[algo:QRmod]{QR-MPC} saves a total of
		\begin{equation}
			\chi_{\text{QRS}}-\chi_{\text{QR}}=T^3(5n_x^3+11n_x^2n_u+5n_xn_u^2)-\mathcal{O}(T^2)
			\label{eq:chi_QRS-chi_QR}
		\end{equation}
		flops, with $\chi_{\text{QRS}}$ the complexity of a standard non-economy QR decomposition algorithm relying on Givens rotations for columns annihilation.
	\end{itemize}
}
\begin{proof}
	$i)$ The complexity of the factorization can be expressed as the sum of terms $\chi_{\text{QR}}=\chi(R)+\chi(Q)$, where  we denote by $\chi(Y)$ the complexity of computing matrix $Y$. Clearly $\chi(R)$ comes from the iterative execution of \hyperref[step:QR-Rstep]{Step~\ref{step:QR-Rstep}}, and $\chi(Q)$ from \hyperref[step:QR-Qstep]{Step~\ref{step:QR-Qstep}}. Let $mp(2n-1)$ be the complexity of the matrix vector product $M=Y\bar D$ with $Y\in\rr^{m\times n}$, $\bar D\in\rr^{n\times p}$, then
	\begin{equation}
		\chi(R)=\sum_{j=1}^{Tn_x}\sum_{i=j+1}^{r_1}6\Big(n_x\min\big(\floor{\tfrac{j-1}{n_x}}+2,T\big)-j+1\Big)
	\label{eq:chir_pre}
	\end{equation}
	Let us define the integral operators $\II_0(n)\triangleq\int_{0}^{n}\tau\,d\tau$, $\II_1(n)\triangleq\int_{0}^{n}(\tau+1)\,d\tau$ and $\II_2(n)\triangleq\int_{0}^{n}(\tau+1)^2\,d\tau$; we can get rid of the \emph{minimum} and \emph{integer part} operations in \eqref{eq:chir_pre} by rearranging the summations such that
	\begin{align}
		&\chi(R)\!=\!6\sum_{k=1}^{T}\sum_{j=1}^{n_x}\sum_{i=1}^{kn_u}(2n_x\!-\!j\!+\!1)\!-\!6\sum_{j=1}^{n_x}\sum_{i=1}^{Tn_u}(n_x\!-\!j\!+\!1)\approx\notag\\
		&\approx 6n_u\Big(\II_1(T)\,(2n_x^2\!-\!\II_0(n_x))\!-\!T(n_x^2\!-\!\II_0(n_x))\Big)\!=\notag\\
		&=\frac{9}{2}T^2n_x^2n_u
		\label{eq:Rflops}
	\end{align}
	On the other hand, the complexity of $\chi(Q)$ is 
	\begin{equation}
		\chi(Q)\!=\!\sum_{j=1}^{Tn_x}\sum_{i=j+1}^{r_1}\!6\big(j+n_u(1+\floor{\tfrac{j-1}{n_x}})-(n_x+n_u)\floor{\tfrac{i-j-1}{n_u}}    \big)
		\label{eq:Qflops_init}
	\end{equation}
	Let us define $w=n_x+n_u$, by expanding the summations in \eqref{eq:Qflops_init} similarly to the contribution of $\chi(R)$, $\chi(Q)$ can be rewritten as
	\begin{align}
		&\chi(Q)\!=\!6\sum_{k=1}^{T}\sum_{j=1}^{n_x}\sum_{h=1}^{k}\sum_{i=1}^{n_u}\big( (k-h)w+n_u+j\big)\approx\notag\\ &\approx6n_u\sum_{k=1}^{T}\Big(k^2n_xw-kn_x^2+k\II_1(n_x)-w\sum_{j=1}^{n_x}\II_0(k)\Big)\approx\notag\\
		& \approx 6n_u\big({n_x\over 2}w\II_2(T)+n_x(1-{n_x\over 2})\II_1(T)\big)=\notag\\
		&=\!T^3(n_x^2n_u\!+\!n_xn_u^2)\!+\!3Tn_xn_u\big(T(\frac{n_x}{2}\!+\!n_u\!+\!1)+(n_u\!+\!3)\big)
		\label{eq:Qflops}
	\end{align}
	From~\eqref{eq:Rflops} and~\eqref{eq:Qflops} we get that $\chi_{\text{QR}}\!=\!\chi(R)\!+\!\chi(Q)\!=\!T^3(n_x^2n_u\!+\!n_xn_u^2)\!+\!3T^2n_un_x(2n_x\!+\!n_u\!+\!1)\!+\!\mathcal{O}(T)$, which proves $i)$.
	
	$ii)$ Let $C\in\rr^{m\times n}$ be a full-rank matrix, the serial complexity $\chi_{\text{QRS}}$ of a standard QR decomposition algorithm based on Givens rotations, and forming $Q$ matrix excplicitly (non-economy version) is
	\begin{align}
		\chi_{\text{QRS}}=\sum_{j=1}^{n}\sum_{i=j+1}^{m}6(m\!+n\!-\!j\!+\!1)\approx6n(m^2\!-\!m\!-\!\frac{n^2}{6})
		\label{eq:stdQR}
	\end{align}
	By replacing $m=T(n_x+n_u)$ and $n=Tn_x$ in \eqref{eq:stdQR} we obtain
	\begin{align}
		\chi_{\text{QRS}}= T^3(5n_x^3\!+\!6n_xn_u^2\!+\!12n_x^2n_u)\!-\!6T^2(n_x^2\!+\!n_un_x)
		\label{eq:chi_QRS}
	\end{align}
	From \eqref{eq:chi_QR} and \eqref{eq:chi_QRS} it follows that $\chi_{\text{QRS}}-\chi_{\text{QR}}=T^3(5n_x^3+11n_x^2n_u+5n_xn_u^2)-\mathcal{O}(T^2)$, which proves $ii)$.
\end{proof}

\begin{theorem}
	\label{thr:cond_complexity}
Let~\eqref{eq:pCLS-cost}--\eqref{eq:pCLS-eq} be the equality constrained least-squares formulation of an MPC problem with $n_x$ states, $n_u$ inputs, a prediction horizon of $T$ steps, $C(\theta)$, $e(\theta)$ as in~\eqref{eq:model-constraints}, and $A(\theta)$, $b(\theta)$ as in~\eqref{eq:linear-cost}. Let 
	\begin{equation}
		\min_s\,\,\frac{1}{2}\| A_r(\theta)s-b_r(\theta)\|^2_2
	\end{equation}
be the reformulation of~\eqref{eq:pCLS-cost}--\eqref{eq:pCLS-eq} in \emph{condensed form} 
after removing equalities~\eqref{eq:pCLS-eq}, where $s\in\rr^{Tn_u}$, $A_r(\theta)\in\rr^{\ell\times Tn_u}$ and $b_r(\theta)\in\rr^\ell$. Then, the serial complexity  for computing $A_r(\theta)$, $b_r(\theta)$ is
	\begin{itemize}
		\item [$i$)]  $\chi_s$ in case of \emph{standard condensing}, with
		\begin{equation}
			\chi_s=T^2(n_x^2n_u-{n_xn_u\over 2})+\mathcal{O}(T)
			\label{eq:chi_sf}
		\end{equation}
		\item [$ii$)] $\chi_q$ in the case of \emph{QR condensing}, with
		\begin{align}
			\chi_q\!= &T^3(n_x^2n_u\!+\!n_xn_u^2)\!+\!T^2(n_x^2(6n_u\!+\!2)\!+\!n_u^2(3n_x\!+\!1)+\notag\\&+\!2n_un_x\!-\!2(n_x\!+\!n_u))\!+\!\mathcal{O}(T)
			\label{eq:chi_qf}
		\end{align}
	\end{itemize}
\end{theorem}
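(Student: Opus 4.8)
The plan is to count, in both cases, the dominant floating-point operations needed to assemble the reduced data $A_r(\theta)$, $b_r(\theta)$, using the same matrix–vector / matrix–matrix cost convention as in \hyperref[lem:qrcomplexity]{Lemma~\ref{lem:qrcomplexity}} (a product of an $m\times n$ by an $n\times p$ matrix costs $mp(2n-1)$ flops), and keeping explicit track of which steps are $\mathcal{O}(T)$ and which are $\mathcal{O}(T^2)$ or $\mathcal{O}(T^3)$.

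For $i)$ I would start from the condensing identity~\eqref{eq:linear-response}, which writes each predicted state $x_k$ as an affine function of $s=[u_0'\ \ldots\ u_{T-1}']'$. Assembling $A_r$, $b_r$ then reduces, up to copying the input-penalty blocks $R_{u_k}$ and applying the (block-diagonal) cost weights, to building the block lower-triangular prediction matrix $\Gamma$ whose $(k,i)$ block is $\prod_{j=i+1}^{k-1}\Aa_j\BB_i$, together with the free-response vector $\phi_k=\prod_{i=0}^{k-1}\Aa_i x_0$ entering $b_r$. The key point is that $\Gamma$ is generated by the forward recursion $\Gamma_{k+1,\cdot}=[\Aa_k\Gamma_{k,\cdot}\ \BB_k]$, $\phi_{k+1}=\Aa_k\phi_k$, with $\Gamma_{k,\cdot}\in\rr^{n_x\times kn_u}$: step $k$ costs $n_x(kn_u)(2n_x-1)$ flops for the product $\Aa_k\Gamma_{k,\cdot}$ (appending $\BB_k$ is free) and only $\mathcal{O}(n_x^2)$ for $\phi_{k+1}$. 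Summing over $k=1,\ldots,T-1$, together with the $\mathcal{O}(T)$ cost of assembling $b_r$ from the references and $\phi_k$, gives $\chi_s=n_xn_u(2n_x-1)\sum_{k=1}^{T-1}k+\mathcal{O}(T)=T^2\big(n_x^2n_u-\tfrac{n_xn_u}{2}\big)+\mathcal{O}(T)$, which is~\eqref{eq:chi_sf}.

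For $ii)$ recall that QR condensing sets $A_r=AQ_2$ and $b_r=b-A\bar z$ with $\bar z=Q_1\bar s$, $\bar s=(R_1')^{-1}e$ (see~\eqref{eq:s_bar}--\eqref{eq:z_of_b}). The plan is to decompose $\chi_q$ into (a) the cost of producing the orthogonal factor, (b) the triangular solve for $\bar s$, (c) the products $\bar z=Q_1\bar s$ and $A_r=AQ_2$, and (d) the bias update $b_r=b-A\bar z$. Term (a) is the only source of the cubic contribution $T^3(n_x^2n_u+n_xn_u^2)$: it coincides with the $T^3$ part of \hyperref[algo:QRmod]{QR-MPC} in \hyperref[lem:qrcomplexity]{Lemma~\ref{lem:qrcomplexity}}, since that is entirely the cost of forming $Q$ from the Givens sweep. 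Term (b) is $\mathcal{O}(T)$ because $R_1$ has the block-bidiagonal triangular structure~\eqref{eq:r_sparsity} and $e$ in~\eqref{eq:model-constraints} vanishes below its first block; term (d) is $\mathcal{O}(T)$ because $A$ is block diagonal. For term (c), I would exploit the sparsity patterns~\eqref{eq:qr_sparsity}: column-block $i$ of $Q_1$ has only $\mathcal{O}((n_x+n_u)i)$ nonzero rows and column-block $i$ of $Q_2$ only $\mathcal{O}((n_x+n_u)(T-i))$, so both $Q_1\bar s$ and $AQ_2$ (the latter also using the block-diagonal structure of $A$) cost $\mathcal{O}(T^2)$ rather than $\mathcal{O}(T^3)$. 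Adding these per-block counts and collecting with the $\mathcal{O}(T^2)$ remainder of the rotation sweep yields the quadratic coefficients in~\eqref{eq:chi_qf}.

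The main obstacle is the $\mathcal{O}(T^2)$-level bookkeeping in $ii)$: the exact quadratic coefficients in~\eqref{eq:chi_qf} depend on how the Givens-rotation structure of \hyperref[algo:QRmod]{QR-MPC} interacts with the formation of $A_r$ and $\bar z$, and on performing the products $Q_1\bar s$ and $AQ_2$ block-wise against the precise nonzero shapes in~\eqref{eq:qr_sparsity} (ideally fusing the subtraction of $A\bar z$ and the left-scaling by the cost matrices into the sweep), so that no step that superficially looks $\mathcal{O}(T^3)$ actually is. One must also check the boundary terms of all the summations — the $-T$-type corrections in series such as $\sum_{k=1}^{T-1}k$ — to confirm they fall into $\mathcal{O}(T)$ and do not perturb the stated $T^3$ and $T^2$ coefficients. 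By comparison, part $i)$ is routine, since there the cost is essentially a single arithmetic series over the prediction horizon.
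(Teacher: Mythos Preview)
Your plan for part $ii)$ is essentially the paper's: the paper writes $\chi_q=\chi_{\text{QR}}+\chi_1^q+\chi_2^q+\chi_3^q+\chi_4^q$ with exactly your four ingredients---$\chi_{\text{QR}}$ from \hyperref[lem:qrcomplexity]{Lemma~\ref{lem:qrcomplexity}} supplying the $T^3$ term and part of the $T^2$ term, $\chi_1^q$ the block-bidiagonal forward solve for $\bar s$ ($\mathcal{O}(T)$ via~\eqref{eq:r_sparsity}), $\chi_2^q$ the product $Q_1\bar s$ and $\chi_4^q$ the product $AQ_2$ (both $\mathcal{O}(T^2)$ via the nonzero shapes in~\eqref{eq:q_sparsity}), and $\chi_3^q$ the update $b-A\bar z$ ($\mathcal{O}(T)$ by block-diagonality of $A$). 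The paper does not fuse anything into the Givens sweep; it simply counts nonzeros in each $Q_{1,i}$, $Q_{2,i}$ column block and sums. So the ``obstacle'' you flag at the $T^2$ level is only bookkeeping, not a structural difficulty.

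For part $i)$, however, your accounting differs from the paper's and contains a gap. The paper writes $z=Fs+f$ with $F$ the block lower-triangular prediction matrix of~\eqref{eq:stdcond_Zz}; exploiting its Toeplitz structure, only the $T-1$ distinct $n_x\times n_u$ blocks $\Aa^k\BB$ are formed, so constructing $F$ costs $(T-1)n_xn_u(2n_x-1)=\mathcal{O}(T)$. The paper's dominant $T^2$ term comes entirely from the product $A(\theta)F$: for each $k$ the weight $R_{x_k}\in\rr^{n_x\times n_x}$ hits $k$ dense $n_x\times n_u$ blocks, giving $\sum_{k=1}^{T}k\cdot n_xn_u(2n_x-1)=T^2\big(n_x^2n_u-\tfrac{n_xn_u}{2}\big)+\mathcal{O}(T)$. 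You instead extract the same $T^2$ coefficient from the recursion $\Gamma_{k+1,\cdot}=[\Aa_k\Gamma_{k,\cdot}\ \BB_k]$ and then treat ``applying the (block-diagonal) cost weights'' as a side remark. That step is \emph{not} lower order: multiplying $R_{x_k}$ into $\Gamma_{k,\cdot}\in\rr^{n_x\times kn_u}$ for $k=1,\ldots,T$ is another $\sum_k k\cdot n_xn_u(2n_x-1)$. With your recursion for $\Gamma$ \emph{plus} a proper count of $A\Gamma$, the $T^2$ coefficient doubles. To recover~\eqref{eq:chi_sf} you must either use the Toeplitz shortcut for $F$ (as the paper does) so that only $AF$ is quadratic, or explain how the weight application can be absorbed into the recursion at no extra leading-order cost; your proposal does neither.
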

\begin{proof}
	$i)$ We first note that the states replacement~\eqref{eq:linear-response} is equivalent to impose the coordinate transformation $z=Fs+f$ with $F\in\rr^{\ell\times Tn_u}$, $f\in\rr^{\ell}$ defined as 
	\begin{align}
		F\!=\!\begin{bmatrix}
			I_{n_u} & 0 & \dots & \dots & 0\\
			\BB &0 & \dots & \dots &  0\\
			0 & I_{n_u} & 0 & \dots &  0\\
			\Aa\BB & \BB & 0 & \dots & 0 \\
			\vdots & \ddots & \ddots & \ddots & \vdots\\
			0 & \dots  & \dots & 0 & I_{n_u} \\
			\Aa^{T-1}\BB & \Aa^{T-2}\BB & \dots & \Aa\BB & \BB\\[1pt]
		\end{bmatrix}\!,\,
		f\!=\!\begin{bmatrix}
			0\\
			\Aa\\
			0\\
			\Aa^2\\
			\vdots\\
			0\\
			\Aa^T   
		\end{bmatrix}x_0
		\label{eq:stdcond_Zz}
	\end{align}
	where in~\eqref{eq:stdcond_Zz} we have omitted the dependence of $\Aa$  and $\BB$ from $k$, and therefore the condensed problem corresponds to~\eqref{eq:pCLS-r} after having replaced $Q_2\gets F$ and $\bar{z}\gets f$. Clearly $CF=0$ holds, and $f$ solves~\eqref{eq:pCLS-eq}. The complexity $\chi^s_1$ associated with forming $f$ is equivalent to $T$ matrix-vector products of dimension $n_x$, thus
	\begin{equation}
		\chi^s_1=Tn_x(2n_x-1)
	\end{equation}
	Let $M=Y\bar D$ be a matrix-vector product with $Y\in\rr^{n_x\times n_x}$, $\bar D\in\rr^{n_x\times n_u}$; then, constructing $F$ has a complexity $\chi_2^s$ equivalent to $T-1$ repetitions of such product, that is
	\begin{equation}
		\chi^s_2=(T-1)n_xn_u(2n_x-1)
	\end{equation}
	The cost $\chi^s_3$ for computing $A(\theta)F$ comes instead from $\sum_{i=1}^{T}i$ repetitions of $M=Y\bar D$, such that
	\begin{equation}
		\chi^3_s=T^2(n_x^2n_u-{n_xn_u\over 2}+Tn_xn_u(2n_x-1)
	\end{equation}
	and the complexity of $b-F\bar{z}$ is
	\begin{align}
		\chi^s_4=Tn_x(2n_x-1)+\ell
	\end{align}
	We finally prove $i)$ by simply computing $\chi_s=\sum_{i=1}^{4}\chi_i^s$.
	
	$ii)$ From \hyperref[lem:qrcomplexity]{Lemma \ref{lem:qrcomplexity}} we know the complexity of factorizing $C(\theta)$, and thus we are left with computing the flops required to form the cost function \eqref{eq:pCLS-cost-r}. We can exploit the sparsity of $R_1$ in \eqref{eq:r_sparsity} when solving the linear system $\bar{s}=(R'_1)^{-1}e$ by forward substitution. That is equivalent to run the update step $\bar{s}_i\gets(e_i-R'_{i,k:i-1}\bar{s}_{k:i-1})/R_{i,i}$, with $k=1+\max(0,n_x\floor{{i-1 \over n_x}}\!-\!1)$ and $i=1,\dots,n$, which takes 
	\begin{equation}
		\chi_1^q=n_x^2(3T-2)+Tn_x
	\end{equation}
	flops to be executed. 
	For the complexity $\chi^q_2$ of matrix-vector product $\bar z\!=\!Q_1\bar{s}$ we exploit instead the sparsity of $Q_1$ in \eqref{eq:q_sparsity}. Let us recall that the flops required for computing $c\!=\!Yd$ with $Y$ sparse, is well approximated by $2n_z$, with $n_z$ the number of non-zero elements of $Y$. Then, we get the complexity
	\begin{equation}
		\chi^q_2=n_x^2(T^2+2T-1)+n_x(n_u(T^2-1)-2T)
	\end{equation}
	The  structure of $A(\theta)$ in \eqref{eq:linear-cost} is first used to derive  the complexity $\chi^q_3$ for computing $b(\theta)-A(\theta)\bar{z}$ that is
	\begin{equation}
		\chi^q_3=2(n_u^2+n_x^2)T+\ell
	\end{equation}
	and then, combined with $Q_2$ sparsity, to derive the complexity $\chi^q_4$ of the matrix-vector product $A(\theta)Q_2$, which is
	\begin{equation}
		\chi^q_4\!=\!T^2(n_x^2\!+\!n_u^2\!-\!2(n_x\!+\!n_u))\!+\!T(2(n_u^2\!+\!n_x^2)\!-\!n_u\!-\!n_x)
	\end{equation}
	Computing $\chi_q=\chi_{\text{QR}}+\sum_{i=1}^4\chi_i^q$ proves $ii)$.
	
\end{proof}

\remark{rmk:qr_improve}{The memory allocation of \hyperref[algo:QRmod]{QR-MPC} can be reduced by overwriting the upper triangular part of $C(\theta)'$ with $R_1$, thus saving the allocation space for $R$. This is easily achieved by computing the plane rotation on the couple $(C(\theta)_{k,i-1},C(\theta)_{k,i})$, and applying it to $C'(\theta)$ at Step \ref*{step:QR-Rstep}. Note that any robust computation of $c$ and $s$ factors, see \cite{BDKM02} for instance, can be used in place of Steps~\ref*{step:c-Qstep} and~\ref*{step:s-Qstep} to avoid over/underflow.	
	In addition, \hyperref[algo:QRmod]{QR-MPC} factorization can be modified to take explicitly into account possible zero-entries of $\Aa$ and $\BB$. If $|C_{k,i-1}|<\epsilon_0$, we get the rotation  $G_r=\smallmat{ 0 & -1\\
		1 & 0}$ which corresponds to a signed swap of $C(\theta)$ and $Q$ columns, saving therefore the flops needed for the matrix-vector products of Steps~\ref*{step:QR-Rstep} and~\ref*{step:QR-Qstep}.	
\qed}

\hyperref[thr:cond_complexity]{Theorem \ref{thr:cond_complexity}} proves the serial complexity of reformulating the cost function~\eqref{eq:pCLS-cost} in the presence of the equality
constraints~\eqref{eq:pCLS-eq}, when either \emph{standard} or \emph{QR} condensing is adopted. The flops for reformulating inequalities~\eqref{eq:pCLS-ineq} are instead neglected because most often in MPC applications, states and inputs are constrained by simple bounds, that is $G\!=\!\begin{bmatrix}I_{\ell}&-I_{\ell}\end{bmatrix}'$, in which case computing~\eqref{eq:pCLS-ineq-r} costs approximately $n_i$ flops, regardless of the condensing method used. If instead constraints involving the linear combination of states and/or inputs are present, for instance output constraints, the cost of reformulating all the closed half-spaces defining a specific constraint along the prediction horizon is $n_u(2n_p-1)\sum_{i=1}^{T}i$ flops, assuming the worst-case in which the constraint is imposed at each time step, where $n_p\in\{1,\dots,n_x+n_u\}$ is the number of variables involved in the definition of the constraint itself.

We finally note that the computational assessment of \hyperref[thr:cond_complexity]{Theorem \ref{thr:cond_complexity}} remains valid even in case a vector $\zeta\in\rr^{n_{\zeta}}$ of slack variables is introduced for softening (some of) the constraints. Indeed, when reformulating the pCLS problem~\eqref{eq:pCLS} such that $\begin{bmatrix}z'&\zeta'\end{bmatrix}'$ is the extended vector of optimization variables, equalities~\eqref{eq:pCLS-eq} are replaced by 
\begin{equation}
\begin{bmatrix} C & 0_{n_{\zeta}} \end{bmatrix}\begin{bmatrix}z\\\zeta\end{bmatrix}=e
\end{equation}
and hence the QR factorization of $C'$ in \eqref{eq:QR-fact-C} becomes
\begin{equation}
\begin{bmatrix}
	C'\\0_{n_{\zeta}}
\end{bmatrix}=\begin{bmatrix}
	Q & 0\\
	0 & I_{n_{\zeta}}
\end{bmatrix}\begin{bmatrix}
R\\0'_{n_{\zeta}}
\end{bmatrix}
\label{eq:extQR}
\end{equation}
From \eqref{eq:extQR} we know that the extended optimizer vector for the pCLS problem~\eqref{eq:pCLS-r} without equality constraints is $\begin{bmatrix}s'&\zeta'\end{bmatrix}'$, which means slack variables are kept as free variables, and one can derive the reduced pCLS by applying \hyperref[algo:QRmod]{QR-MPC} to \eqref{eq:pCLS} and extend the optimizer vector afterwards.

\begin{algorithm}[t]
	\caption{\textcolor{mycol1}{QR-MPC} factorization for efficient condensing.}
	\label{algo:QRmod}
	~~\textbf{Input}: Matrix $C(\theta)\in\rr^{Tn_x\times \ell}$ of equality constraints~\eqref{eq:pCLS-eq}, $n_u$, $n_x$, $T$, zero-detection tolerance $\epsilon_0$.
	\vspace*{.1cm}\hrule\vspace*{.1cm}
	\begin{enumerate}[label*=\arabic*., ref=\theenumi{}]
		\item $Q\gets I_{\ell}$;
		\item $R\gets C(\theta)'$;
		\item \textbf{for} $j=1,\ldots,Tn_x$ \textbf{do}:
		\begin{enumerate}[label=\theenumi{}.\arabic*., ref=\theenumi{}.\arabic*]
			\item $k\gets \floor{\frac{j-1}{n_x}}$; 
			\item $r_1\gets j+n_u(k+1)$; 
			\item $r_2\gets n_x\min(k+2,T)$; 
			\item \textbf{for} $i=r_1,\ldots,j+1$ \textbf{do}:
			\begin{enumerate}[label=\theenumii{}.\arabic*., ref=\theenumii{}.\arabic*]
				\item $q_1\gets1+(n_x+n_u)\floor{\frac{i-j-1}{n_u}}$;
				\item \textbf{if} $|R_{i,k}|>\epsilon_0$   \textbf{do}:
				\begin{enumerate}[label=\theenumii{}.\arabic*., ref=\theenumii{}.\arabic*]
					\item $c\gets {R_{i-1,k} / \sqrt{R_{i-1,k}^2+R_{i,k}^2}}$\label{step:c-Qstep};
					\item $s\gets -{R_{i,k} / \sqrt{R_{i-1,k}^2+R_{i,k}^2}}$\label{step:s-Qstep}; 
					\item $R_{i-1:i,j:r_2}\gets\begin{bmatrix}
						c & s\\ -s &c
					\end{bmatrix}'R_{i-1:i,j:r_2}$ 	\label{step:QR-Rstep};
					\item	$Q_{q_1:r_1,i-1:i}\gets Q_{q_1:r_1,i-1:i}\begin{bmatrix}
						c & s\\ -s &c
					\end{bmatrix}$\label{step:QR-Qstep};
				\end{enumerate}
			\end{enumerate}            
		\end{enumerate}
		\item \textbf{end}.
	\end{enumerate}
	\vspace*{.1cm}\hrule\vspace*{.1cm}
	~~\textbf{Output}: Orthogonal matrix $Q\in\rr^{\ell\times\ell}$ and upper triangular matrix  $R\in\rr^{\ell\times n_e}$ such that $QR=C(\theta)'.$
\end{algorithm}

\example{ex:QR-efficiency}{From~\hyperref[ex:QR-LTI-unstable]{Example~\ref{ex:QR-LTI-unstable}} it is clear that a numerically robust method to eliminate equalities is~\emph{mandatory} when the system dynamics are unstable. Indeed, condensing by means of~\eqref{eq:linear-response} blows-up numerically even in double precision. When instead  $\Aa_k$, $k=1,\dots,T$ are stable, standard condensing is appealing because of the reduced throughput required to compute the $(F,f)$ pair and form \eqref{eq:pCLS-generic}, see the results of \hyperref[thr:cond_complexity]{Theorem~\ref{thr:cond_complexity}}. Here we want to show that even in this scenario, a robust elimination of equalities may still be valuable despite the increase in computational complexity. Consider parameters $n_x$, $n_u$, $\rank(A)$, $\lambda_j$, $\Aa_{k}$, $\BB_{k}$, $R_{u_{k\!-\!1}}$, $R_{x_k}$, $k\!=\!1,\dots,T$, $j\!=\!1,\dots,n_x$ defined according to the random pCLS setup of \hyperref[ex:QR-LTI-unstable]{Example~\ref{ex:QR-LTI-unstable}}, and restricted to the sole case of stable dynamics. We enforce box constraints on all the inputs, that is $u^-\le u_k\le u^+$, $k\!=\!1,\dots,T$, with $u^-_{(j)}\sim\mathcal{U}(-\delta,\delta)$, $u^+_{(j)}\sim\mathcal{U}(u^-_{(j)},u^-_{(j)}\!+\!\delta)$, $j=1,\dots,n_u$ and $\delta\!=\!1$. We also constrain the states such that $x^-\le x_i\le x^+$, $i\!=\!1,\dots,T$ and 
\begin{align}
&x^-_{(j)}=\begin{cases}
	\sim\mathcal{U}(-\delta,\delta)\quad \text{if } x^-_{(j)} \text{ is imposed}\\
	-\infty\quad \text{otherwise}
\end{cases}\notag\\
&x^+_{(j)}=\begin{cases}
	\sim\mathcal{U}(x^-_{(j)},x^-_{(j)}\!+\!\delta)\quad \text{if } x^+_{(j)} \text{ is imposed}, x^-_{(j)}\neq-\infty\\		\sim\mathcal{U}(-\delta,\delta)\quad \text{if } x^+_{(j)} \text{ is imposed}, x^-_{(j)}\equiv-\infty\\
	\infty\quad \text{otherwise}
\end{cases}
\end{align}
with $j=1,\dots,n_x$. For each problem we randomly select which upper and lower bounds on the states are enforced in such a way that the total number of constraints is $n_i=2n_u+m_x$, with $m_x\sim\mathcal{D}(\floor{{n_x\over 2}},\floor{{4n_x\over 3}})$. We consider two procedures for solving such generated MPC problem. The first, denoted ``MPC\textsubscript{STD}'' is based on standard condensing and solves the reduced pCLS problem~\eqref{eq:pCLS-generic} constructed by applying the variable transformation $z=Fs+f$, see~\eqref{eq:stdcond_Zz}. In the second one, which is denoted by ``MPC\textsubscript{QR}'', one has to first factorize $C'$ (by means of the \hyperref[algo:QRmod]{QR-MPC} algorithm), compute the transformation $z=Q_2s+Q_1(R_1')^{-1}e$, and compute the matrices defining~\eqref{eq:pCLS-generic}, which is ultimately solved. In both cases we solve the reduced pCLS problem for $s$ by means of the Alternating Direction Method of Multipliers (ADMM); the reader is referred to \cite{BPC11} for mathematical details. 
Let us define  $s^{j+1}$ and $h^{j+1}$ the sequential primal updates on the directions $s$ and $h$ at the $j$-th iteration, and $w^{j+1}$ the dual update, the steps of an over-relaxed ADMM applied to~\eqref{eq:pCLS-generic} are:
\begin{align}
	\label{eq:ADMMiter}
	s^{j+1}&=(A_r'A_r+\rho G_r'G_r)^{-1}(A'_rb_r-\rho G_r'(s^j+w^j-g_r))\notag\\
	h^{j+1}&=\begin{bmatrix}\alpha_1h^j
	-\alpha(G_rs^{j+1}-g_r)-w^j	
	\end{bmatrix}_+\\
w^{j+1}&=w^j+\alpha(G_rs^{j+1}+h^{j+1}-g_r)+\alpha_1(h^{j+1}-h^{j})\notag
\end{align}
with $\alpha\in(0,2)$ the relaxation parameter, and $\rho>0$ the weight on the augmented Lagrangian.
\begin{figure}[tb]
	\includegraphics[width=\hsize]{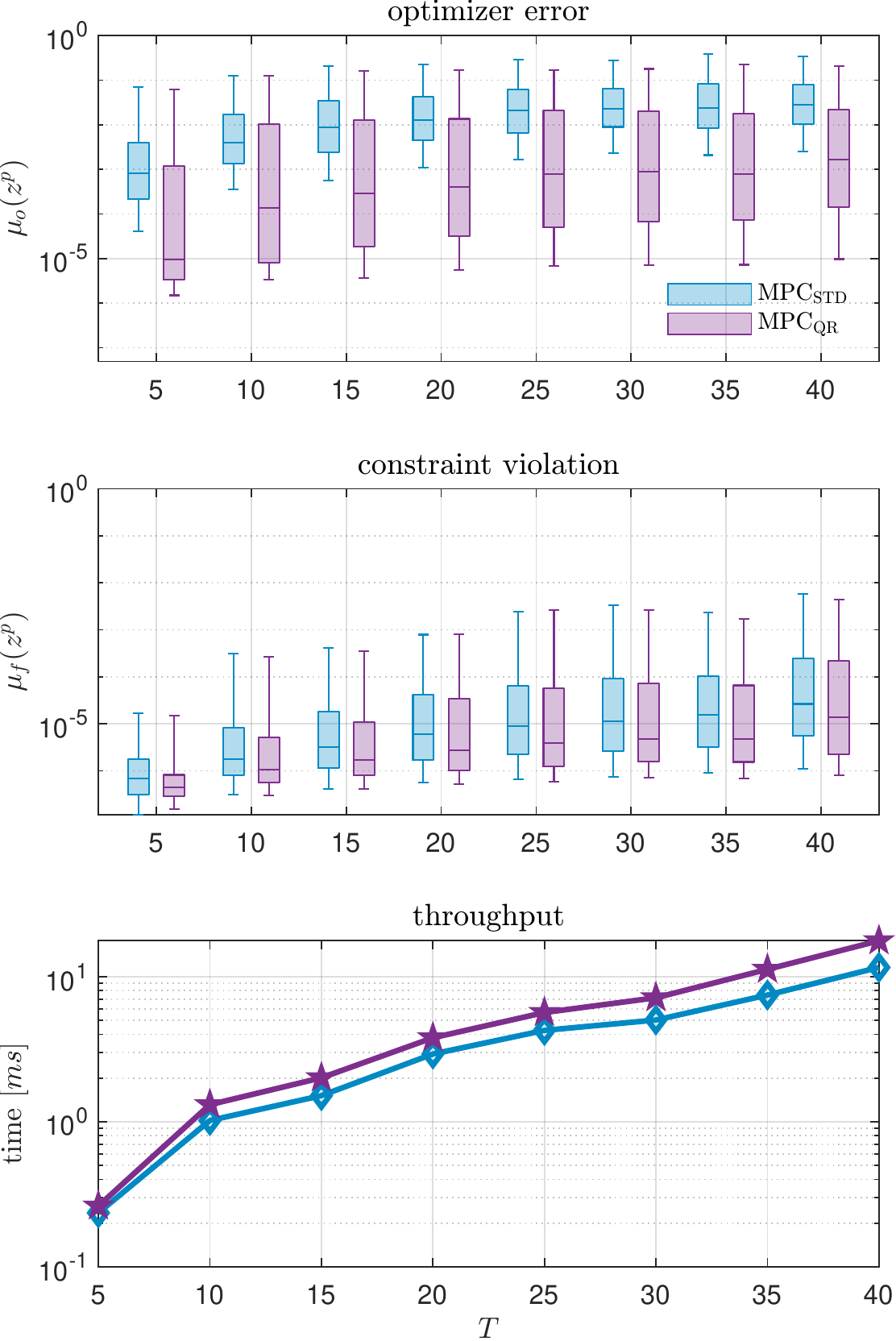}
	\caption{Comparison MPC\textsubscript{STD} and MPC\textsubscript{QR} in terms of solution quality and computational load on a set of $n_t=1000$ random \emph{stable} MPC problems. Algorithms are coded in single precision floating-point arithmetic, and the reduced pCLS \eqref{eq:pCLS-generic} is solved by a fixed number of ADMM iterations ($p=200$), see~\eqref{eq:ADMMiter}. Distribution of the optimizer error (\emph{top}), distribution of the maximum constraint violation (\emph{mid}) and shifted geometric mean ($h_t=10$) of the time required to both condense and solve the reduced problem (\emph{bottom}) are reported.}
	\label{fig:mpc_benchmark}
\end{figure}
We consider a \emph{single precision} floating-point implementation of MPC\textsubscript{STD} and MPC\textsubscript{QR}, with ADMM running for a fixed amount  $p=200$ of iterations. The quality of the solution reached after $p$ iterations is evaluated in terms of the optimizer error $\mu_o(z^p)$ and the violation $\mu_f(z^p)$ of the constraints with respect to the original problem~\eqref{eq:pCLS}, defined as:
\begin{equation}
	\mu_o(z^p)= \dfrac{\|z^*-z^p\|}{\|z^*\|},\quad \mu_f(z^p)=\max\left\{J\begin{bmatrix}
		|Cz^p-e|\\(Gz^p-g)_+
	\end{bmatrix}\right\}
\end{equation}
where $J=\diag(d_J)^{-1}$, $d_J=
    [\|[C_1\ e_1]\|_2$ $\dots$ $\|[C_{n_e}\ e_{n_e}]\|_2$
    $\|[G_1\ g_1]\|_2$ $\dots$ $\|[G_{n_i}\ g_{n_i}]\|_2]'$.
Clearly we have that $z^p=Fs^p+f$ for MPC\textsubscript{STD}, and $z^p=Q_2s^p+Q_1(R_1')^{-1}e$ for MPC\textsubscript{QR}. \hyperref[fig:mpc_benchmark]{Figure~\ref{fig:mpc_benchmark}} shows the distribution of optimality and feasibility when solving $n_t=1000$ random MPC problems for each $T$. The bottom plot shows also the total computational load of both MPC routines, obtained as the \emph{shifted geometric mean}
\begin{equation}
	\nu(t)=\left(\prod_{i=1}^{n_t} (t_i+h_t)\right)^{1/n_t}
	\label{eq:SGM}
\end{equation}
where $t\in\rr^{n_t}$ is the array collecting the execution time over the $n_t$ different CLS problems, and $h_t$ is a shift parameter. We assume $h_t=10$ in this example. MPC\textsubscript{QR} improves both the quality metrics, especially the optimizer error which is reduced by more than one order of magnitude. Such results undoubtedly show that the robust elimination is not only an option to deal with unstable systems, but it is beneficial even  when standard condensing is reliable. The price for such robustness is an increase in the total throughput of the MPC routine, as shown in the figure. We stress that in our setup the computational load of solving~\eqref{eq:pCLS-generic} is identical for MPC\textsubscript{STD} and MPC\textsubscript{QR}, being $p$ and the steps of a single ADMM iteration the same. This means that the time difference is only due to the increased complexity of computing the variable transformation and forming~\eqref{eq:pCLS-generic}. The relative impact on the total time is therefore determined by the overhead of the optimization algorithm used to solve~\eqref{eq:pCLS-generic}. Moreover, algorithms whose convergence rate depends on $\kappa(A_r)$ will converge faster for an MPC\textsubscript{QR} implementation, with a consequent shrinking of the computational gap.

\begin{figure}[tb]
	\includegraphics[width=\hsize]{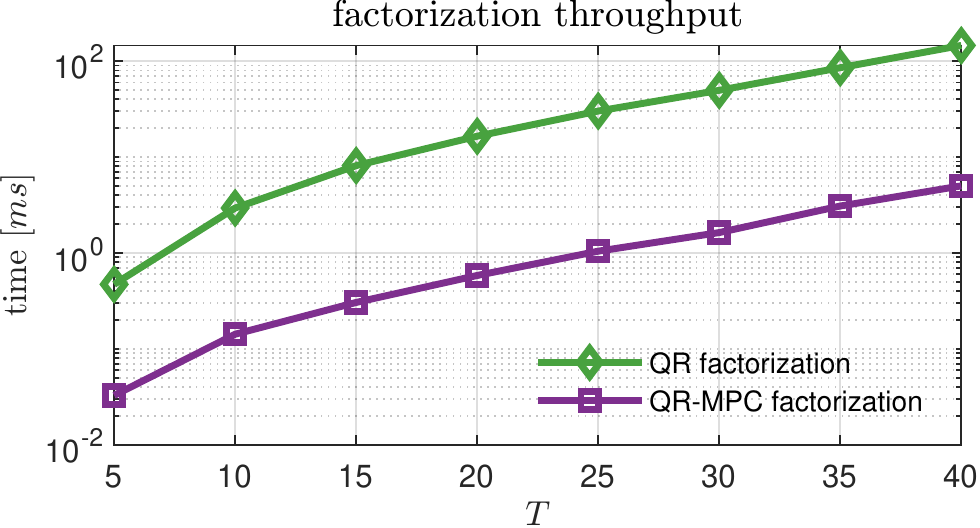}
	\caption{Computational evaluation of the proposed variable elimination method.
		\hyperref[algo:QRmod]{QR-MPC} (\emph{violet}) is compared to a standard (\emph{green}) non-economy QR factorization, see~\cite[Algorithm 5.2.4]{GVL13}. The time shown for each horizon $T$ is the shifted geometric mean~\eqref{eq:SGM}, with $h_t=10$, computed over $n_t=1000$ random CLS problems.
	}
	\label{fig:qr_benchmark}
\end{figure}
Lastly, we highlight the  efficiency of the~\hyperref[algo:QRmod]{QR-MPC} algorithm by comparing its throughput with respect to a standard non-economy QR decomposition based on Givens rotations, see \cite[Algorithm 5.2.4]{GVL13}. \hyperref[fig:qr_benchmark]{Figure~\ref{fig:qr_benchmark}} shows the timing results on the same set of MPC problems used in~\hyperref[fig:mpc_benchmark]{Figure~\ref{fig:mpc_benchmark}}. Note that for the implementation of~\hyperref[algo:QRmod]{QR-MPC} we have taken into account throughput and memory efficiency ideas proposed in \hyperref[rmk:qr_improve]{Remark~\ref{rmk:qr_improve}}. This is the building block of the MPC\textsubscript{QR} routine and it is indeed the main contribution to the computation time shown by the bottom plot of \hyperref[fig:mpc_benchmark]{Figure~\ref{fig:mpc_benchmark}}. By comparing the two figures we can conclude that without \hyperref[algo:QRmod]{QR-MPC} the computational gap between MPC\textsubscript{STD} and MPC\textsubscript{QR} would have been worse by up to an order of magnitude.
\qed}

\subsection{Reduction of the number of variables}
In MPC applications, it is often observed that reducing the number of degrees of freedom to $m<n$ does not compromise closed-loop performance, while it instead simplifies the on-line optimization problem. As mentioned in \hyperref[sec:intro]{Section~\ref{sec:intro}}, a way that is commonly used in MPC to reduce the number of optimization variables is \emph{move blocking}, which consists of keeping the input signal $u_k$ constant between prediction steps $k_i$ and $k_{i+1}-1$, $i=0,\ldots,m_u$, with $m_u$ the number of steps where the input signal is free to change, $k_0=0$ and  $k_{m_u}=T$. In this way, the number of optimization variables is reduced from $n$ to $m=m_un_u$. Frequently, $k_i=i$ for $i=0,\ldots,m_u-1$, i.e., the input signal is free to move within a \emph{control horizon} of $m_u$ steps and then is frozen afterwards until the end of the prediction horizon $T$. For example, having a control horizon $m_u\leq 5$ is enough in most MPC problems, due to the receding-horizon mechanism of MPC. Reference governors~\cite{BCM98,GKT95} are an extreme case in which, thanks to the presence of a prestabilizing feedback loop, using the control horizon $m_u=1$ to optimize the reference signals achieve the task of fulfilling constraints and maintaining good closed-loop performance.

\emph{Basis functions} have been suggested to reduce the number of degrees
of freedom by parametrizing the input sequence as
\begin{equation}
    u_k=\sum_{i=1}^{m} v_i\phi_i(k)
    \label{eq:blocking-moves-phi}
\end{equation}
where $\{\phi_i(\cdot)\}_{i=1}^m$ is a given basis. Blocking moves can be seen as a special case in which the basis functions are  the unit-step function $\one(k-k_i)=1$ for $k\geq k_i$, or zero otherwise,
\begin{equation}
    u_k=\sum_{i=1}^{m} v_i\one(k-k_i)
    \label{eq:blocking-moves}
\end{equation}
The use of Laguerre polynomials to parameterize the input sequence was investigated in~\cite{KR13}, which also examines other choices of orthogonal functions.
Laguerre polynomials were also used to parameterize
both the input and state sequences~\cite{MD19}, in a way that these are invariant to time shifts
and hence amenable for warm-starting the new optimization problem with the shifted solution of the previous problem.

Note that the use of basis functions as in~\eqref{eq:s-basis} becomes necessary when the free optimization variables $s$ have no system theoretical meaning, such as in case the QR method described in \hyperref[eq:QRimpl]{Section~\ref{eq:QRimpl}} is used to eliminate equality constraints, for which there is no direct and intuitive method like blocking moves to reduce the number of free variables, unless blocking-moves are introduced before removing equality constraints.

As we work in discrete-time over a finite horizon of $T$ steps, ``basis functions'' are nothing else than just vectors $\phi_i\in\rr^{n}$. Therefore, finding a basis 
of $m$ elements to parameterize the input sequence $\{u_k\}_{k=0}^{T-1}$ is equivalent to finding a matrix $\Phi\in\rr^{Tn_u\times m}$ with full column-rank. The SVD-based method described in \hyperref[sec:basis-SVD]{Section~\ref{sec:basis-SVD}}, or alternatively its generalization in \hyperref[sec:K-SVD]{Section~\ref{sec:K-SVD}},
can be immediately applied for this task. The approach described in \hyperref[sec:z1]{Section~\ref{sec:z1}} can
be also applied to preserve the components of the optimization vector corresponding to the applied command input $u_0$ as much as possible with respect to the original solution of~\eqref{eq:pCLS}.
Variable reduction through PCA of the condensed Hessian was also proposed to simultaneously reduce the optimizer size and limit the ill-conditioning effects of unstable systems. 
However, this needs an online SVD  when $A$ or $C$ depend on $\theta$, similarly to \cite{OW14,RGSF04,UMJ12}, and the factorization is performed on an ill-conditioned matrix. 

Regarding the feasibility of the reduced pCLS problem, we can apply the techniques presented in \hyperref[sec:feasibility]{Section~\ref{sec:feasibility}}. In particular, if a feasible input and state trajectory $z_f$ is available, then the corresponding vector $s_f$ can computed and one can set $\phi_0=s_f$ or use the parameterization~\eqref{eq:s-basis-feas}.

\section{Examples of application to MPC problems}

We apply the methods developed in the previous sections to the classical benchmark problem of MPC of a continuous stirring tank reactor (CSTR)~\cite{BMR20}. The system has $n_x=2$ states (the temperature of the reactor $T_r$ [K] and the concentration $C_A$ [kgmol/m$^3$] of reactant $A$) and $n_u=1$ input (coolant temperature $T_c$ [K]), see~\cite{SMED10}. The goal is to make $C_A$ track a given reference $C_A^{\text{ref}}$ by applying a linear parameter-varying MPC
with sample time $T_s=0.5$~h. Accordingly, the nonlinear dynamics are linearized around the current state $x(k)$ and previous input $u(k-1)$, then converted to discrete-time by first-order Euler approximation. 

We assume that the feed-stream concentration $C_{A_f}$ and temperature $T_f$, that are measured disturbances, are kept constant, namely $C_{A_f}$ = 10 kgmol/m$^3$
and $T_f$ = 298.15 K. Units will be omitted since now on where obvious.
The following constraints are imposed on the manipulated input $T_c$:
\[
    \ba{c}
    285.15\leq T_{c,k}\leq 312.15\\
    -3\leq T_{c,k}-T_{c,k-1}\leq 3\\
    \ea
\]
In order to handle constraints and weights on input increments $\Delta u_k=u_k-u_{k-1}$, the $2^{\rm nd}$-order model is extended with the additional dynamics $u_k=u_{k-1}+\Delta u_k$. We choose a prediction horizon $T=20$, preview on the reference signal of
one step (namely, $C_{A,k}^{\text{ref}}$ and  $C_{A,k+1}^{\text{ref}}$ are known at time step $k$), weight matrix $R_{\delta u}=0.01$ on input increments, $R_x=\smallmat{0 &0&0\\0&0 &1\\0&0&0}$ on the extended state $[x_k'\ u_{k-1}]'$. 

As shown in \hyperref[tab:J]{Table~\ref{tab:J}}, without degradation of closed-loop performance we also limit the control horizon to $n=3$ free moves, namely $\Delta u_k=0$ for all $k=n,n+1,\ldots,T-1$, where closed-loop performance is quantified by the following index
\begin{equation}
    J=\sum_{k=0}^{N} (C_A(k)-C_{A,k}^{ref})^2+0.01(T_{c,k}-T_{c,k-1})^2
    \label{eq:cost_J}
\end{equation}
and $N$ is the total number of closed-loop simulation steps.

Starting from the steady-state initial condition 
$u_{-1}=298.15$ and $x_0=[311.267\ 8.5695]'$, we apply the MPC controller
defined above and obtain the closed-loop results depicted in \hyperref[fig:CSTR-trajectories]{Figure~\ref{fig:CSTR-trajectories}}, corresponding to the cost $J_{\rm exact,3}$ reported in \hyperref[tab:J]{Table~\ref{tab:J}}.

\begin{table}
        \begin{center}
        \begin{tabular}{l|l|l}
            performance index & value & MPC setting \\[.5em]\hline
            $J_{\rm exact,2}$ &  567.8602 &  $n=2$, exact solution\\
            $J_{\rm exact,3}$ & 320.1679 & $n=3$, exact solution\\
            $J_{\rm exact,4}$ & 319.9293 & $n=4$, exact solution\\
            $J_{\rm exact,10}$ & 319.6873& $n=10$, exact solution\\
            $J_{\rm exact,20}$ & 319.6846& $n=20=T$, exact solution\\\hline
            $J_{\rm svd}$ &  328.3266 &  $n=3$, $m=2$ (single SVD)\\
            $J_{\rm ksvd}$ & 320.1677 &  $n=3$, $m=2$ (K-SVD)\\\hline
        \end{tabular}
        \end{center}
        \caption{Closed-loop performance index $J$ computed as in~\eqref{eq:cost_J} for different MPC solution methods.         \label{tab:J}
        }
\end{table}

\begin{figure}[t]
    \includegraphics[width=\hsize]{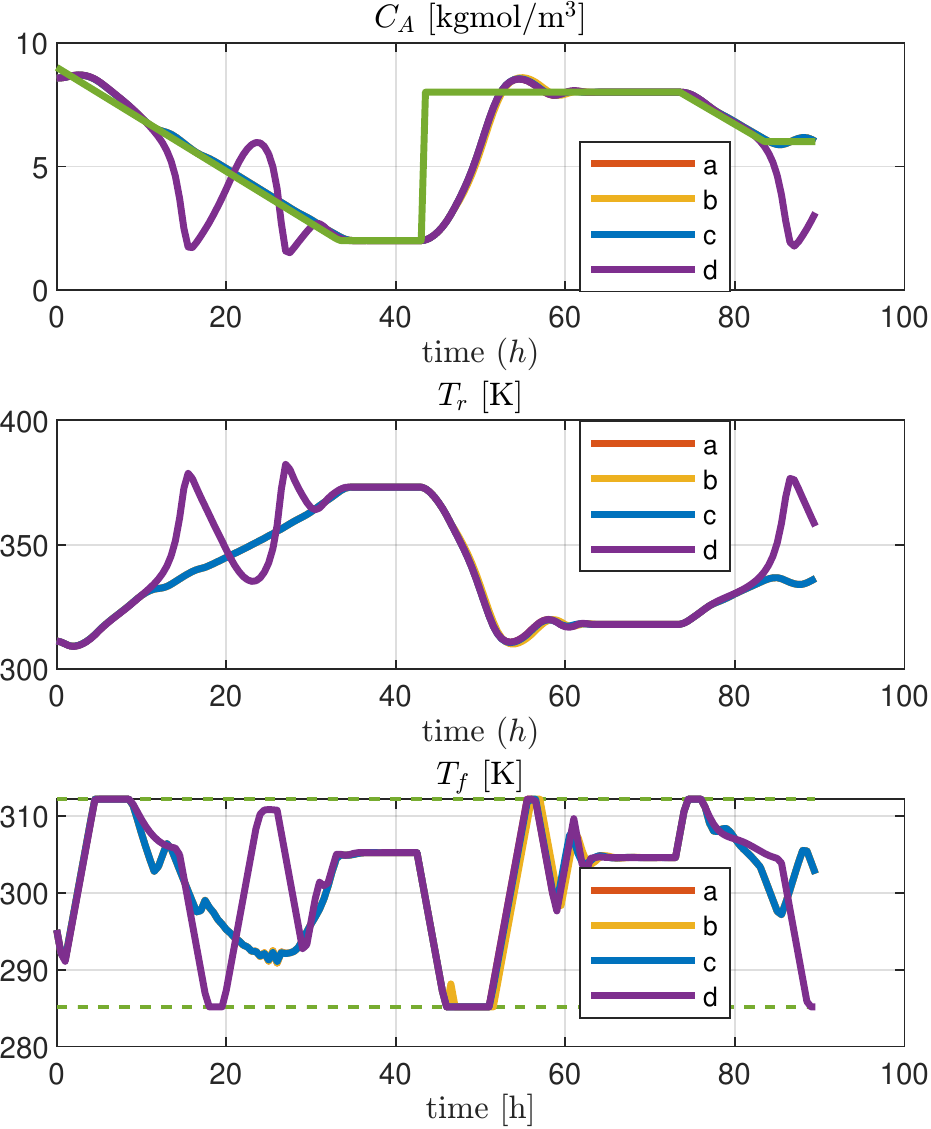}
    \caption{Closed-loop MPC results: (a) exact solution with control horizon $n=3$;
 (b) single SVD and (c) \hyperref[algo:K-basis]{$K$-SVD} ($K=10$) with two basis vectors, solving approximately the MPC problem with control horizon $n=3$; (d) exact solution with control horizon $n=2$.
The exact solution with  $n=3$ and the $K$-SVD solution are almost indistinguishable.}
    \label{fig:CSTR-trajectories}
\end{figure}

In order to reduce the number of optimization variables from $n=3$ to
$m=2$, we generate a dataset of $M=10,\!000$ samples of vector $\theta=[T_{r,k}\ C_{A,k}\ T_{c,k-1}\ C_{A,k}^{\text{ref}}\ C_{A,k+1}^{\text{ref}}]'$. The samples are generated by running a closed-loop simulation under the MPC controller designed
above with random step signal sampled from the uniform distribution between $2$ and $9$ kgmol/m$^3$ as reference signal $C_A^{\rm ref}$, where at each sample step the set-point has probability 10\% of switching. For each sample $\theta_i$, the QR factorization~\eqref{eq:QRfact-2} is applied to the MPC problem formulation as a constrained least-squares problem, so that the sequence $u^*_i=[\Delta u_k^*\ \Delta u_{k+1}^*\ \Delta u_{k+2}^*]_i'$ of optimal input increments is computed as in~\eqref{eq:z_of_s-a} from the solution $z_i^*=[z_{1i}^*\ s_{2i}^*]'$ of the reduced-order problem~\eqref{eq:pCLS-r}.
This is augmented with a slack variable $\zeta$ as in~\eqref{eq:pCLS-ineq-r-v-soft}
to avoid any infeasibility, which is heavily penalized by adding $(10^5\zeta)^2$
in the least-squares problem. As suggested in \hyperref[sec:z1]{Section~\ref{sec:z1}}, SVDs are applied to samples $\smallmat{\tau z_{1i}^*\\s_{2i}^*}$, with $\tau=20$ in order to favor the reconstruction of the first input increment $z_1^*$. 

We first compute a single SVD decomposition to find a basis $\Phi$ and offset $\phi_0$. The closed-loop simulation results obtained by minimizing with respect
to $v\in\rr^2$ are also reported in \hyperref[fig:CSTR-trajectories]{Figure~\ref{fig:CSTR-trajectories}}, corresponding to the performance index $J_{\rm svd}$ reported in \hyperref[tab:J]{Table~\ref{tab:J}}.

Next, we apply the $K$-SVD approach, running  \hyperref[algo:K-basis]{Algorithm~\ref{algo:K-basis}} with $K=10$.
The sizes $M_j$ of the obtained clusters are 12, 550, 2758, 155, 7, 506, 1111, 1036, 3610, 255. For each $j=1,\ldots,K$, we train a one-to-all neural classifier
with three hidden layers of 10, 10, and 6 neurons, respectively, with sigmoidal activation function, cascaded by a sigmoidal output function, corresponding to 281 coefficients to learn per classifier. The cross-entropy loss is used during training, which is accomplished in 1271 seconds on the same machine used in \hyperref[ex:suboptimality-SVD]{Example~\ref{ex:suboptimality-SVD}} using the ODYS Deep Learning Toolset~\cite{ODYS-DLT}.

The corresponding closed-loop simulation results are depicted in \hyperref[fig:CSTR-trajectories]{Figure~\ref{fig:CSTR-trajectories}}, corresponding to the performance index $J_{\rm ksvd}$
reported in \hyperref[tab:J]{Table~\ref{tab:J}}, which is indistinguishable from $J_{\rm exact}$.
The index of the basis function chosen at each step by selecting the neural classifier with the largest value is depicted in \hyperref[fig:MPC-ksvd-index]{Figure~\ref{fig:MPC-ksvd-index}}, while \hyperref[fig:MPC-ksvd-partition]{Figure~\ref{fig:MPC-ksvd-partition}} shows the section in the $(T_r,C_A)$ space of the five-dimensional partition induced by the classifiers,
for the remaining coordinates set to $u_{k-1}=299.9943$, $C_{A,k}^{\rm ref}=5.5121$, $C_{A,k+1}^{\rm ref}=5.5121$ (these values are the average values computed on the training dataset). The figure also shows the values encountered
during the closed-loop simulation reported in \hyperref[fig:CSTR-trajectories]{Figure~\ref{fig:CSTR-trajectories}}.

\begin{figure}[t]
    \includegraphics[width=\hsize]{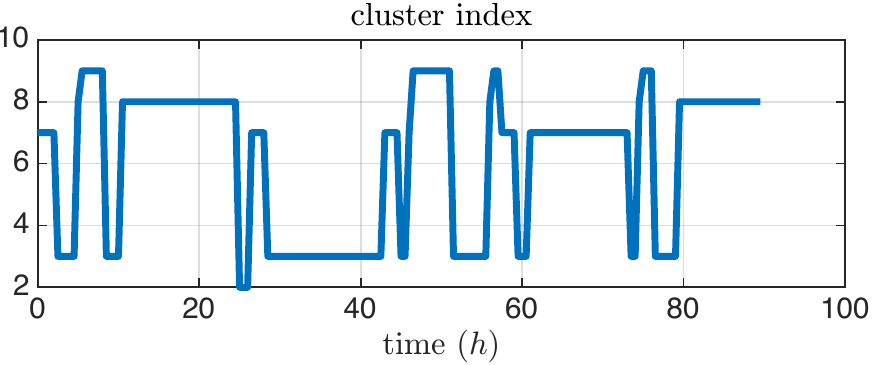}
    \caption{Index of basis chosen when applying MPC based on \hyperref[algo:K-basis]{$K$-SVD}.}
    \label{fig:MPC-ksvd-index}
\end{figure}
\begin{figure}[t]
    \includegraphics[width=\hsize]{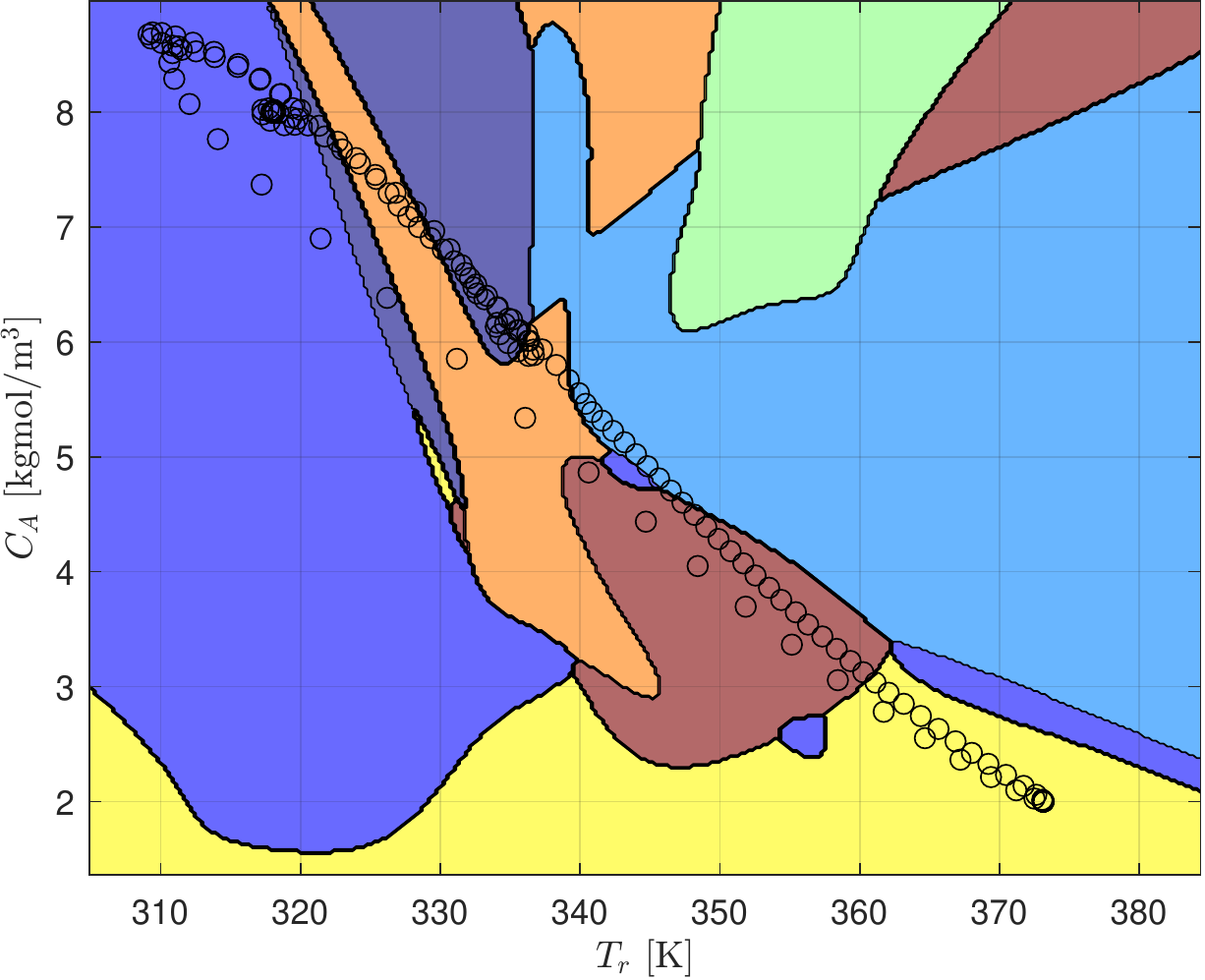}
    \caption{Section in the $(T_r,C_A)$ space of the five-dimensional partition induced by the neural classifiers for separating the clusters identified by K-SVD. The
    values encountered during the closed-loop simulation reported in Figure~\ref{fig:CSTR-trajectories} are shown as black circles.}
    \label{fig:MPC-ksvd-partition}
\end{figure}

Finally, we test how the MPC algorithm performs without SVD approximations for the same number of degrees of freedom, that is by setting
the control horizon $n=m=2$. The results are also shown in Figure~\ref*{fig:CSTR-trajectories},
corresponding to the closed-loop index $J_{\rm exact,2}$ reported
in Table~\ref*{tab:J}. 

\section{Conclusions}
In this paper we have investigated numerical methods for reducing the number of variables
in pCLS problems, such as those that are encountered in MPC formulations. The $K$-SVD algorithm is a general method that extends linear PCA to handle parameter-dependent sample vectors, still keeping the resulting approximation of the vectors a linear combination of the principal components, so that the reduced problem remains a pCLS. We have also shown that the QR factorization is a much more numerically-stable approach than standard condensing, and provided a specialized QR-based equality elimination scheme that exploits the special structure of pCLS' arising from MPC. 

Further research will be devoted to address issues of recursive feasibility and closed-loop stability of MPC schemes in which the degrees of freedom are reduced by using basis functions. 
Moreover, although we have addressed least-squares problems, our approach can be extended to other parameter-dependent optimization problems, such as to QP's, which would be an immediate extension, but also to other convex and nonconvex problem classes.

\bibliographystyle{ieeetran}
\bibliography{input_parameterization_bib}

\begin{IEEEbiography}[{\includegraphics[width=1in,height=1.25in,clip,keepaspectratio]{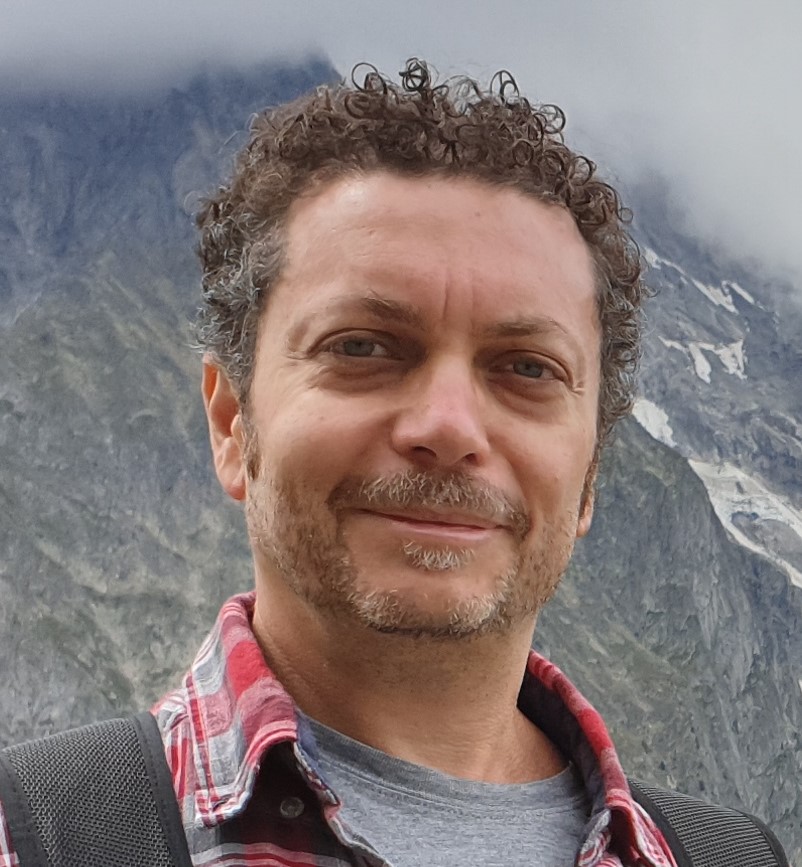}}]%
    {Alberto Bemporad} received his Master's degree in Electrical Engineering in 1993 and his Ph.D. in Control Engineering in 1997 from the University of Florence, Italy. In 1996/97 he was with the Center for Robotics and Automation, Department of Systems Science \& Mathematics, Washington University, St. Louis. In 1997-1999 he held a postdoctoral position at the Automatic Control Laboratory, ETH Zurich, Switzerland, where he collaborated as a senior researcher until 2002. In 1999-2009 he was with the Department of Information Engineering of the University of Siena, Italy, becoming an Associate Professor in 2005. In 2010-2011 he was with the Department of Mechanical and Structural Engineering of the University of Trento, Italy. Since 2011 he is Full Professor at the IMT School for Advanced Studies Lucca, Italy, where he served as the Director of the institute in 2012-2015. He spent visiting periods at Stanford University, University of Michigan, and Zhejiang University. In 2011 he cofounded ODYS S.r.l., a company specialized in developing model predictive control systems for industrial production. He has published more than 350 papers in the areas of model predictive control, hybrid systems, optimization, automotive control, and is the co-inventor of 16 patents. He is author or coauthor of various software packages for model predictive control design and implementation, including the Model Predictive Control Toolbox (The Mathworks, Inc.) and the Hybrid Toolbox for MATLAB. 
    He was an Associate Editor of the IEEE Transactions on Automatic Control during 2001-2004 and Chair of the Technical Committee on Hybrid Systems of the IEEE Control Systems Society in 2002-2010. He received the IFAC High-Impact Paper Award for the 2011-14 triennial and the IEEE CSS Transition to Practice Award in 2019. He is an IEEE Fellow since 2010.
\end{IEEEbiography}

\begin{IEEEbiography}[{\includegraphics[width=1in,height=1.25in,clip,keepaspectratio]{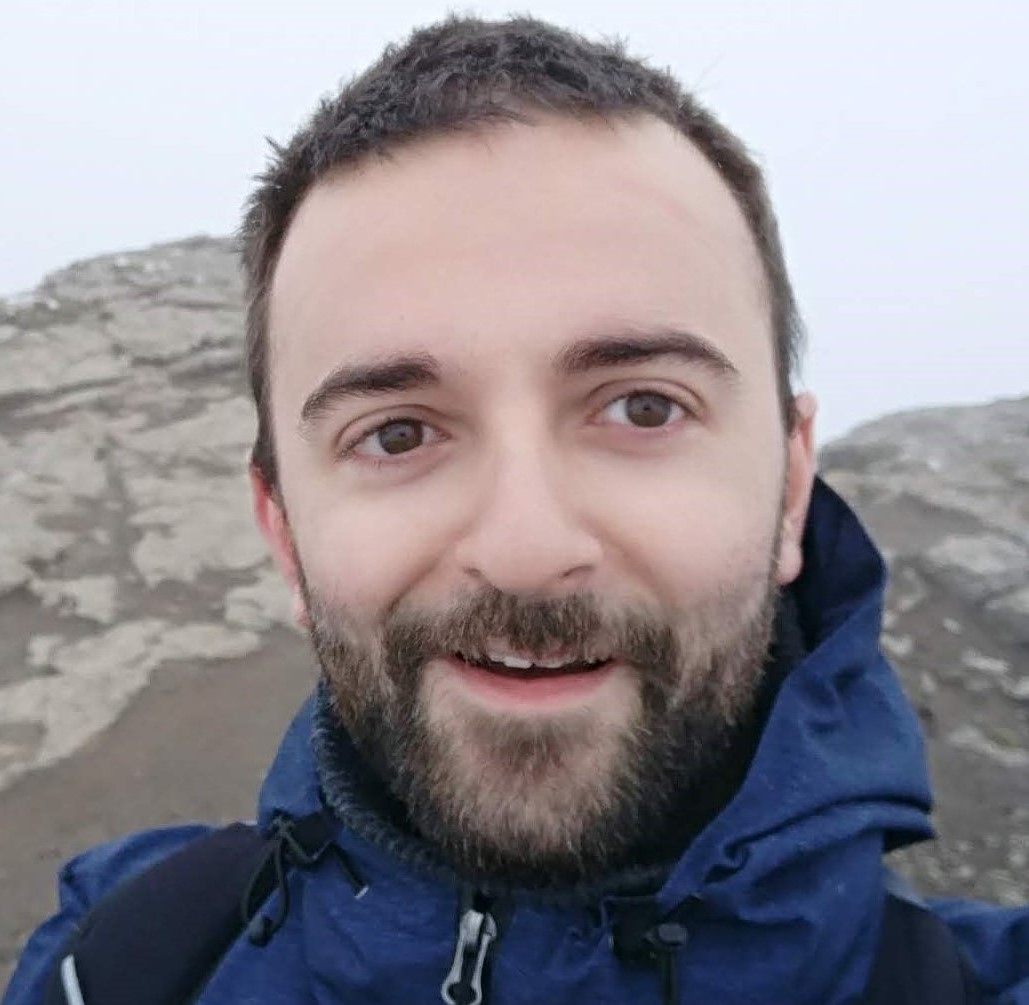}}]%
        {Gionata Cimini} received his Master's degree in Computer and Automation Engineering in 2012 and his Ph.D. degree in Information Engineering, in 2017, from Universit\`a Politecnica delle Marche, Italy. He has been a guest Ph.D. scholar at the IMT School for Advanced Studies Lucca, Italy, and a visiting Ph.D. scholar at University of Michigan, USA. He held research assistant positions at the Information Department, Universit\`a Politecnica delle Marche and at the Automotive Research Center, University of Michigan. In 2016, he was a contract employee at General Motors Company, USA. In 2017 he joined ODYS S.r.l.,  where he is currently the technical manager for numerical optimization. His research interests include model predictive control, embedded optimization, and their application to problems in the automotive, aerospace, and power electronics domains.
\end{IEEEbiography}

\end{document}